\numberwithin{equation}{section}
\numberwithin{figure}{section}
\theoremstyle{plain}
\newtheorem{thm}{\protect\theoremname}
  \theoremstyle{remark}
  \newtheorem{acknowledgement}[thm]{\protect\acknowledgementname}
  \theoremstyle{remark}
  \newtheorem{rem}[thm]{\protect\remarkname}
  \theoremstyle{definition}
  \newtheorem{defn}[thm]{\protect\definitionname}
  \theoremstyle{plain}
  \newtheorem{assumption}[thm]{\protect\assumptionname}
  \theoremstyle{plain}
  \newtheorem{lem}[thm]{\protect\lemmaname}
  \theoremstyle{plain}
  \newtheorem{cor}[thm]{\protect\corollaryname}
  \providecommand{\acknowledgementname}{Acknowledgement}
  \providecommand{\assumptionname}{Assumption}
  \providecommand{\corollaryname}{Corollary}
  \providecommand{\definitionname}{Definition}
  \providecommand{\lemmaname}{Lemma}
  \providecommand{\remarkname}{Remark}
\providecommand{\theoremname}{Theorem}
\begin{document}

\title[A Levy-area between brownian motion and rough paths with applications]{A levy-area between brownian motion and rough paths with applications
to robust non-linear filtering and rpdes }

\author{Joscha Diehl}

\address{TU Berlin, Institut für Mathematik, Straße des 17.~Juni 136, 10623
Berlin}

\email{diehl@math.tu-berlin.de}

\author{Harald Oberhauser}

\address{University of Oxford, Oxford--Man Institute, Eagle House, Walton
Well Road}

\email{harald.oberhauser@oxford-man.ox.ac.uk}

\author{sebastian riedel}

\address{TU Berlin, Institut für Mathematik, Straße des 17.~Juni 136, 10623
Berlin}

\email{riedel@math.tu-berlin.de}

\subjclass[2000]{60H10,60H30,60H05,60G15,60G17}

\keywords{Existence of path integrals, Integrability of rough differential
equations with Gaussian signals, Clark's robustness problem in nonlinear
filtering, Viscosity solutions of RPDEs}
\begin{abstract}
We give meaning and study the regularity of differential equations
with a rough path term and a Brownian noise term, that is we are interested
in equations of the type 
\begin{align*}
S_{t}^{\boldsymbol{\eta}} & =S_{0}+\int_{0}^{t}a\left(S_{r}^{\boldsymbol{\eta}}\right)dr+\int_{0}^{t}b\left(S_{r}^{\boldsymbol{\eta}}\right)\circ dB_{r}+\int_{0}^{t}c\left(S_{r}^{\boldsymbol{\eta}}\right)d\boldsymbol{\eta}_{r}
\end{align*}
where $\boldsymbol{\eta}$ is a deterministic geometric, step-$2$
rough path and $B$ is a multi-dimensional Brownian motion. En passant,
we give a short and direct argument that implies integrability estimates
for rough differential equations with Gaussian driving signals which
is of independent interest.
\end{abstract}
\maketitle

\section{Introduction}

The contribution of this article is twofold: firstly, we give meaning
to differential equations of the type 
\begin{equation}
S_{t}^{\boldsymbol{\eta}}=S_{0}+\int_{0}^{t}a\left(S_{r}^{\boldsymbol{\eta}}\right)dr+\int_{0}^{t}b\left(S_{r}^{\boldsymbol{\eta}}\right)\circ dB_{r}+\int_{0}^{t}c\left(S_{r}^{\boldsymbol{\eta}}\right)d\boldsymbol{\eta}_{r},\label{eq:rsde-1}
\end{equation}
that is, for a deterministic, step $2$-rough path $\boldsymbol{\eta}$
we are looking for a stochastic process $S^{\boldsymbol{\eta}}$ that
is adapted to $\sigma\left(B\right)$ and study the regularity of
the map $\boldsymbol{\eta}\mapsto S^{\boldsymbol{\eta}}$. Secondly,
we take this as an opportunity to revisit the integrability estimates
of solutions of rough differential equations driven by Gaussian processes.

If either $b\equiv0$ or $c\equiv0$ then rough path theory \cite{lyons-98,lyons-qian-02,lyons-04,MR2091358,friz-victoir-book}
or standard It\={o}-calculus allow (under appropriate regularity assumptions
on the vector fields $\left(a,b,c\right)$) to give meaning to (\ref{eq:rsde-1}).
However, in the generic case when the vector fields $b$ and $c$
have a non-trivial Lie bracket, any notion of a solution (that is
consistent with an It\={o}--Stratonovich calculus) must take into
account the area swept out between the trajectories of $B$ and $\boldsymbol{\eta}$.
A natural approach is to identify $S^{\boldsymbol{\eta}}$ as the
RDE solution of 
\begin{equation}
S_{t}=S_{0}+\int_{0}^{t}\left(a,b,c\right)\left(S_{r}\right)d\left(r,\boldsymbol{\Lambda}_{r}\right)\label{eq:rsde}
\end{equation}
where $\boldsymbol{\Lambda}$ is a joint, step-$2$ rough path lift
between the enhanced Brownian motion $\boldsymbol{B}=\left(1+B+\int B\otimes\circ dB\right)$
and $\mathcal{\boldsymbol{\eta}}$, and $\left(r,\boldsymbol{\Lambda}\right)$
is the joint rough path between the random rough path $\boldsymbol{\Lambda}$
and the bounded variation path $r\mapsto r$. While the existence
of a joint lift between a continuous bounded variation path and any
rough path is trivial (via integration by parts), the existence of
a joint lift between two given step-$2$ rough paths is more subtle
and in general not possible. More precisely, let $\alpha\in\left(\frac{1}{3},\frac{1}{2}\right)$
and denote with $\mathcal{C}^{0,\alpha}\left(\mathbb{R}^{d}\right)$
the space of geometric, step-$2$, $\alpha$-Hölder rough paths over
$\mathbb{R}^{d}$ (we often only write $\mathcal{C}^{0,\alpha}$ and
$d$ is chosen according to context). Fix two geometric, step-2 rough
paths $\boldsymbol{\eta}=\left(1+\boldsymbol{\eta}^{1}+\boldsymbol{\eta}^{2}\right)\in\mathcal{C}^{0,\alpha}\left(\mathbb{R}^{d}\right)$,
$\boldsymbol{b}=\left(1+\boldsymbol{b}^{1}+\boldsymbol{b}^{2}\right)\in\mathcal{C}^{0,\alpha}\left(\mathbb{R}^{e}\right)$.
In general, one cannot hope to find a joint rough path lift, i.e.~a
geometric rough path $\boldsymbol{\lambda}=\left(1+\boldsymbol{\lambda}^{1}+\boldsymbol{\lambda}^{2}\right)\in\mathcal{C}^{0,\alpha}\left(\mathbb{R}^{d+e}\right)$
such that (formally) 
\[
\boldsymbol{\lambda}^{1}=\left(\boldsymbol{\eta}^{1},\boldsymbol{b}^{1}\right)\text{ and }\boldsymbol{\lambda}^{2}=\left(\begin{array}{cc}
\boldsymbol{\eta}^{2} & \int\eta\otimes db\\
\int b\otimes d\eta & \boldsymbol{b}^{2}
\end{array}\right)
\]
since the entries on the cross--diagonal of $\boldsymbol{\lambda}^{2}$
are not well--defined.  (What is guaranteed by the extension theorem
in \cite{Lyons2007835} is that there exists a weak geometric rough
path $\overline{\boldsymbol{\lambda}}$ such that \textbf{$\overline{\boldsymbol{\lambda}}^{1}=\left(\boldsymbol{\eta}^{1},\boldsymbol{b}^{1}\right)$},
however this $\mathcal{\boldsymbol{\overline{\lambda}}}$ is highly
non-unique and no consistency with $\boldsymbol{\eta}$ or $\boldsymbol{b}$
on the second level is guaranteed). 

In Section \ref{sec:the-joint-lift} we show that in the case when
the deterministic rough path $\boldsymbol{b}$ is replaced by enhanced
Brownian motion $\boldsymbol{B}$, there does indeed exists a stochastic
process $\boldsymbol{\Lambda}$ which merits in a certain sense to
be called the ``canonical joint lift'' of $\boldsymbol{\eta}$ and
$B$. In Section \ref{sec:differential-equations-with} we use this
lift $\boldsymbol{\Lambda}$ to give meaning to differential equations
(\ref{eq:rsde-1}) resp.~(\ref{eq:rsde}) and establish local Lipschitzness
of the solution map $\boldsymbol{\eta}\mapsto S^{\boldsymbol{\eta}}$
from the space of geometric rough paths equipped with Hölder metric
into the space of stochastic processes adapted to the Brownian filtration
equipped with the topology of uniform convergence in $L^{q}\left(\Omega\right)$-norm.
This is exactly the type of robustness we are interested in and finally
allows us to turn to our initial motivation: differential equations
of the form (\ref{eq:rsde-1}) naturally arise in certain robustness
problems and were previously treated with a flow decomposition which
ultimately leads to stronger regularity assumptions on the vector
fields. In Section \ref{sec:Applications} we give two such applications.
One revisits Clark's robustness problem in nonlinear filtering and
provides an alternative to the recent approach via flow decomposition
carried out in \cite{CrisanDiehlFrizOberhauser}, the other one is
a Feynman--Kac representation of solutions of PDEs with linear rough
path noise.

Our application to stochastic filtering demands exponential integrability
of differential equations driven by $\boldsymbol{\Lambda}$. We take
this as an opportunity to revisit existing results on integrability
estimates for rough paths and rough differential equations in a general
setup (which then even implies Gaussian integrability for differential
equations driven by $\boldsymbol{\Lambda}$ that are uniform in $\boldsymbol{\eta}$).
In Section \ref{sec:integrability-and-tail} we give a surprisingly
short proof of the integrability properties of RDEs driven by Gaussian
rough paths by revisiting and combining the key insights from \cite{MR2661566}
and \cite{bibCassLyonsLitterer} in a direct and tractable way which
we think is of independent interest.
\begin{acknowledgement}
JD, HO and SR were supported by the European Research Council under
the European Union's Seventh Framework Programme ERC grant agreement
nr.~258237. JD was also supported by DFG Grant SPP-1324. HO was also
supported by the European Research Council under the European Union's
Seventh Framework Programme ERC grant agreement nr.~291244 and by
the Oxford--Man Institute.
\begin{acknowledgement}
The authors would like to thank Thomas Cass, Dan Crisan, Peter Friz
and Terry Lyons for helpful conversations. 
\end{acknowledgement}
\end{acknowledgement}

\section{\label{sec:the-joint-lift}The joint lift}

As usual we denote with $Lip^{\gamma}$ the set of $\gamma$-Lipschitz
functions $a:\mathbb{R}^{d_{1}}\to\mathbb{R}^{d_{2}}$ in the sense
of E.~Stein%
\footnote{That is bounded $k$-th derivative for $k=0,\ldots,\left\lfloor \gamma\right\rfloor $
and $\left(\gamma-\left\lfloor \gamma\right\rfloor \right)$-Hölder
continuous $\left\lfloor \gamma\right\rfloor $-th derivative, where
$\left\lfloor \gamma\right\rfloor $ is the largest integer strictly
smaller then $\gamma$. %
} where $d_{1}$ and $d_{2}$ are chosen according to the context.
$G_{d}^{2}\cong\mathbb{R}^{d}\oplus so\left(d\right)$ is the free
nilpotent group%
\footnote{This is the correct state space for a geometric $1/p$-Hölder rough
path; the space of such paths subject to $1/p$-Hölder regularity
(in rough path sense) yields a complete metric space under $1/p$-Hölder
rough path metric. Technical details of geometric rough path spaces
can be found e.g. in Section 9 of \cite{friz-victoir-book}.%
} of step $2$ over $\mathbb{R}^{d}$. We equip the space of geometric
rough paths with the non-homogeneous metric $\rho_{\alpha-H\ddot{o}l}$
which makes it a Polish space, denoted $\mathcal{C}^{0,\alpha}$,
and denote the associated non-homogeneous norm%
\footnote{We denote norms on linear spaces with $\left|.\right|$ and ``norms''
on non-linear spaces (like $G_{d}^{2}$ or $\mathcal{C}_{d}^{0,\alpha}$)
with $\left\Vert .\right\Vert $.%
} on this non-linear space with $\left\Vert .\right\Vert _{\alpha-H\ddot{o}l}$,
(similarly we denote the non-separable space of weak geometric rough
paths with $\mathcal{C}^{\alpha}\left(\mathbb{R}^{d}\right)$, cf.~\cite[Chapter 9.2]{friz-victoir-book}).
\begin{thm}
\label{thm:joint lift}Let $\alpha\in\left(\frac{1}{3},\frac{1}{2}\right)$,
$\boldsymbol{\eta}\in\mathcal{C}^{0,\alpha}\left(\mathbb{R}^{d}\right)$
and $B=\left(B^{i}\right)_{i=1}^{e}$ be an $e$-dimensional Brownian
motion carried on a probability space $\left(\Omega,\mathcal{F},\mathcal{F}_{t},\mathbb{P}\right)$
satisfying the usual conditions. Then for every $\alpha'<\alpha$
there exists a $\mathcal{C}^{0,\alpha'}\left(\mathbb{R}^{d+e}\right)$
-valued random variable $\boldsymbol{\Lambda}=\boldsymbol{\Lambda}^{\boldsymbol{\eta}}$
on $\left(\Omega,\mathcal{F},\mathcal{F}_{t},\mathbb{P}\right)$ which
fulfills $\mathbb{P}$-a.s.~that for every $t\geq0$,
\begin{eqnarray}
\boldsymbol{\Lambda}_{t}^{1;i} & = & \begin{cases}
\boldsymbol{\eta}_{t}^{1;i} & \text{, if }i\in\left\{ 1,\ldots,d\right\} \\
B_{t}^{i-d} & \text{, if }i\in\left\{ d+1,\ldots,d+e\right\} 
\end{cases}\label{eq:consistent}\\
\boldsymbol{\Lambda}_{t}^{2;i,j} & = & \begin{cases}
\boldsymbol{\eta}_{t}^{2;i,j} & \text{, if }i,j\in\left\{ 1,\ldots,d\right\} \\
\int_{0}^{t}B_{r}^{i-d}\circ dB_{r}^{j-d} & \text{, if }i,j\in\left\{ d+1,\ldots,d+e\right\} .
\end{cases}\nonumber 
\end{eqnarray}
Moreover,\renewcommand*\theenumi{\roman{enumi}}
\begin{enumerate}
\item \textup{$\boldsymbol{\Lambda}^{\boldsymbol{\eta}}$ }has Gaussian
tails, locally uniform in $\boldsymbol{\eta}$: $\forall r>0$ $\exists$$\delta=\delta\left(\alpha',\alpha,T,r\right)>0$
such that \textup{
\begin{equation}
\sup_{\left\Vert \boldsymbol{\eta}\right\Vert {}_{\alpha-H\ddot{o}l}\le r}\mathbb{E}\exp\left(\delta\left\Vert \boldsymbol{\Lambda}^{\boldsymbol{\eta}}\right\Vert {}_{\alpha'-H\ddot{o}l}^{2}\right)<\infty.\label{eq:Gauss tail lambda}
\end{equation}
}
\item $\boldsymbol{\eta}\mapsto\boldsymbol{\Lambda}^{\boldsymbol{\eta}}$
is locally Lipschitz in $L^{q}$: $\forall r>0$, there exists a constant
$c_{Lip}=c_{Lip}\left(r,q,\alpha,\alpha'\right)$ such that for all
$\boldsymbol{\eta},\bar{\boldsymbol{\eta}}\in\mathcal{C}^{0,\alpha}\left(\mathbb{R}^{d}\right)$
with $\left\Vert \boldsymbol{\eta}\right\Vert {}_{\alpha-H\ddot{o}l}$,$\left\Vert \bar{\boldsymbol{\eta}}\right\Vert {}_{\alpha-H\ddot{o}l}\le r$
\[
\left|\rho_{\alpha'-H\ddot{o}l}\left(\boldsymbol{\Lambda}^{\boldsymbol{\eta}},\boldsymbol{\Lambda}^{\overline{\boldsymbol{\eta}}}\right)\right|_{L^{q}\left(\Omega;\mathbb{R}\right)}\le c_{Lip}\rho_{\alpha-H\ddot{o}l}\left(\boldsymbol{\eta},\bar{\boldsymbol{\eta}}\right).
\]

\item $\boldsymbol{\Lambda}$ is consistent with the Stratonovich lift for
semimartingales: let $N$ be a multidimensional continuous semimartingale
carried on another probability space $\left(\overline{\Omega},\overline{\mathcal{F}},\overline{\mathcal{F}}_{t},\overline{\mathbb{P}}\right)$
and consider the product space with $\left(\Omega,\mathcal{F},\mathcal{F}_{t},\mathbb{P}\right)$
equipped with $\overline{\mathbb{P}}\otimes\mathbb{P}$. Denote with
$\boldsymbol{N}$ resp.~$\left(\boldsymbol{N},\boldsymbol{B}\right)$
the Stratonovich lift of the semimartingales $N$ resp.~$\left(N,B\right)$.
Then for $\overline{\mathbb{P}}$-a.e.~$\overline{\omega}\in\overline{\Omega}$
we have 
\[
\mathbb{P}\left[\omega:\boldsymbol{\Lambda}^{\boldsymbol{N}\left(\overline{\omega}\right)}\left(\omega\right)=\left(\boldsymbol{N},\boldsymbol{B}\right)\left(\overline{\omega}\otimes\omega\right)\right]=1.
\]

\end{enumerate}
\end{thm}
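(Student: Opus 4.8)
The plan is to construct $\boldsymbol{\Lambda}=\boldsymbol{\Lambda}^{\boldsymbol{\eta}}$ on the level of increments and then invoke a Kolmogorov-type criterion to obtain a genuine geometric $\alpha'$-H\"older rough path. Level one and the diagonal blocks of the second level are prescribed by $(\ref{eq:consistent})$; the only nontrivial data are the two off-diagonal blocks. For $i\in\{1,\dots,d\}$, $j\in\{d+1,\dots,d+e\}$ I would set
\[
\boldsymbol{\Lambda}_{s,t}^{2;i,j}:=\int_{s}^{t}\boldsymbol{\eta}_{s,r}^{1;i}\,dB_{r}^{j-d},
\]
the It\^o (equivalently Wiener) integral of the deterministic continuous integrand $r\mapsto\boldsymbol{\eta}_{s,r}^{1;i}$; since $\boldsymbol{\eta}^{1}$ is deterministic, $[\boldsymbol{\eta}^{1;i},B^{j-d}]=0$ and this coincides with the corresponding Stratonovich integral. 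For $j\in\{1,\dots,d\}$, $i\in\{d+1,\dots,d+e\}$ geometricity forces
\[
\boldsymbol{\Lambda}_{s,t}^{2;i,j}:=\boldsymbol{\eta}_{s,t}^{1;j}B_{s,t}^{i-d}-\boldsymbol{\Lambda}_{s,t}^{2;j,i},
\]
which is the Young--Stieltjes integral $\int_{s}^{t}B_{s,r}^{i-d}\,d\boldsymbol{\eta}_{r}^{1;j}$ whenever $\boldsymbol{\eta}^{1}$ happens to have bounded variation. Together with $\boldsymbol{\eta}^{2}$, $\int B\otimes\circ dB$ and $(\boldsymbol{\eta}^{1},B)$ this defines a $G_{d+e}^{2}$-valued random variable for each $(s,t)$; additivity of the It\^o integral together with Chen's relation for $\boldsymbol{\eta}$ and for $\boldsymbol{B}$ gives Chen's relation $\boldsymbol{\Lambda}_{s,t}=\boldsymbol{\Lambda}_{s,u}\otimes\boldsymbol{\Lambda}_{u,t}$ a.s., and geometricity holds on the off-diagonal blocks by construction, on the $\eta\eta$-block because $\boldsymbol{\eta}$ is geometric and on the $BB$-block because $\boldsymbol{B}$ is.

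Next I would upgrade this to a continuous rough path of the claimed regularity via the Kolmogorov criterion for rough paths (see \cite{friz-victoir-book}). On level one $\mathbb{E}|\boldsymbol{\Lambda}_{s,t}^{1}|^{q}\lesssim|t-s|^{q\alpha}$, using that the $\boldsymbol{\eta}$-coordinates are deterministic and $\alpha$-H\"older and $\mathbb{E}|B_{s,t}|^{q}\lesssim|t-s|^{q/2}$ with $1/2>\alpha'$. On level two the $\eta\eta$-block is deterministic and $2\alpha$-H\"older, the $BB$-block obeys $\mathbb{E}|(\int B\otimes\circ dB)_{s,t}|^{q}\lesssim|t-s|^{q}$, and the cross block $\boldsymbol{\Lambda}_{s,t}^{2;i,j}$ is centred Gaussian with variance $\int_{s}^{t}|\boldsymbol{\eta}_{s,r}^{1;i}|^{2}\,dr\le\Vert\boldsymbol{\eta}\Vert_{\alpha-H\ddot{o}l}^{2}|t-s|^{1+2\alpha}$, so $\mathbb{E}|\boldsymbol{\Lambda}_{s,t}^{2;i,j}|^{q}\lesssim\Vert\boldsymbol{\eta}\Vert_{\alpha-H\ddot{o}l}^{q}|t-s|^{q(1/2+\alpha)}$; since $2\alpha'<1/2+\alpha$ (as $\alpha'<\alpha<1/2$) this is more than enough, with all constants depending on $\boldsymbol{\eta}$ only through $\Vert\boldsymbol{\eta}\Vert_{\alpha-H\ddot{o}l}$. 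This produces, for every $\alpha'<\alpha$, a $\sigma(B)$-measurable modification $\boldsymbol{\Lambda}^{\boldsymbol{\eta}}\in\mathcal{C}^{0,\alpha'}(\mathbb{R}^{d+e})$ for which $(\ref{eq:consistent})$ holds; alternatively one recognises $\boldsymbol{\Lambda}^{\boldsymbol{\eta}}$ as the $L^{q}$-limit in $\mathcal{C}^{0,\alpha'}$ of the joint Stratonovich lifts of $(\eta^{(n)},B)$ along bounded-variation approximations $\eta^{(n)}$ of $\boldsymbol{\eta}$, which makes geometricity automatic.

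For (i) I would note that, after subtracting the deterministic $\boldsymbol{\eta}$-coordinates, $\boldsymbol{\Lambda}^{\boldsymbol{\eta}}$ is built from Wiener functionals of order at most two: the $B$-coordinates and the cross block $\int\boldsymbol{\eta}^{1}\,dB$ lie in the first chaos (the latter with $L^{2}$-norm $\lesssim\Vert\boldsymbol{\eta}\Vert_{\alpha-H\ddot{o}l}$, up to a $T$-dependent constant), while $\int B\otimes\circ dB$ lies in the second. Feeding the increment bounds above into a Garsia--Rodemich--Rumsey estimate and invoking Borell's inequality for Banach-space-valued Wiener chaos (equivalently, a generalized Fernique theorem; cf.\ the discussion in Section \ref{sec:integrability-and-tail}) shows that $\Vert\boldsymbol{\Lambda}^{\boldsymbol{\eta}}\Vert_{\alpha'-H\ddot{o}l}$ has a Gaussian tail, and since the moment constants depend on $\boldsymbol{\eta}$ only through $\Vert\boldsymbol{\eta}\Vert_{\alpha-H\ddot{o}l}\le r$, tracking them yields a single $\delta=\delta(\alpha',\alpha,T,r)$ as in $(\ref{eq:Gauss tail lambda})$. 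For (ii), linearity does the work: the level-one increments of $\boldsymbol{\Lambda}^{\boldsymbol{\eta}}$ and $\boldsymbol{\Lambda}^{\bar{\boldsymbol{\eta}}}$ differ only in the $\boldsymbol{\eta}$-coordinates, by $\boldsymbol{\eta}^{1}-\bar{\boldsymbol{\eta}}^{1}$, the $BB$-blocks cancel, the $\eta\eta$-blocks differ by $\boldsymbol{\eta}^{2}-\bar{\boldsymbol{\eta}}^{2}$, and the cross block $\int(\boldsymbol{\eta}^{1}-\bar{\boldsymbol{\eta}}^{1})\,dB$ is centred Gaussian with variance $\lesssim\Vert\boldsymbol{\eta}^{1}-\bar{\boldsymbol{\eta}}^{1}\Vert_{\alpha-H\ddot{o}l}^{2}$; feeding these differences into the same Kolmogorov estimate, now retaining the mixed terms of the non-homogeneous metric (which is where the $r$-dependence of $c_{Lip}$ enters), gives the asserted $L^{q}$-Lipschitz bound.

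Finally, for (iii) I would fix $\overline{\omega}$ outside a $\overline{\mathbb{P}}$-null set on which $\boldsymbol{N}(\overline{\omega})$ is a geometric $\alpha$-H\"older rough path, so that $\boldsymbol{\Lambda}^{\boldsymbol{N}(\overline{\omega})}$ is defined; level one and the diagonal blocks of $\boldsymbol{\Lambda}^{\boldsymbol{N}(\overline{\omega})}$ and of $(\boldsymbol{N},\boldsymbol{B})$ agree by definition, and by the geometric relation it suffices to match the cross block $\int N^{i}(\overline{\omega})\,dB^{j}$. On the product space $N$ and $B$ are independent, so $[N^{i},B^{j}]\equiv0$ and $\int N^{i}\circ dB^{j}=\int N^{i}\,dB^{j}$ in the It\^o sense; a conditional (stochastic) Fubini argument — approximate $N^{i}$ by simple processes, identify Riemann sums, pass to the limit — then shows that for $\overline{\mathbb{P}}$-a.e.\ $\overline{\omega}$ the $\overline{\omega}$-section of this product-space It\^o integral coincides with the Wiener integral $\int N^{i}(\overline{\omega})\,dB^{j}$ defining the cross block of $\boldsymbol{\Lambda}^{\boldsymbol{N}(\overline{\omega})}$, and the geometric relation propagates this to the remaining cross block. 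The main obstacle is (i): the rest is either linearity or Kolmogorov/Fubini bookkeeping, whereas the uniform-in-$\boldsymbol{\eta}$ Gaussian integrability requires the Borell/Fernique input together with care that the relevant constants depend on $\boldsymbol{\eta}$ only through $\Vert\boldsymbol{\eta}\Vert_{\alpha-H\ddot{o}l}$.
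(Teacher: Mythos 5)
Your plan is correct and follows essentially the same route as the paper: the cross block is the Wiener integral $\int\eta\,dB$ with the other off-diagonal entry forced by geometricity, the $\alpha'$-H\"older regularity comes from the variance bound $\int_s^t(\eta^i_{s,u})^2\,du\le\Vert\boldsymbol{\eta}\Vert_{\alpha-H\ddot{o}l}^2|t-s|^{1+2\alpha}$ fed into a Kolmogorov/GRR criterion, the Gaussian tail of the H\"older norm uses a generalized Fernique argument with constants depending on $\boldsymbol{\eta}$ only through its norm, the $L^q$-Lipschitz estimate exploits linearity of the cross block in $\eta$, and consistency with the Stratonovich lift is a Fubini/Riemann-sum argument using independence of $N$ and $B$. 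These are exactly the steps in the paper's proof, so no substantive gap.
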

\begin{proof}
Define 
\begin{eqnarray}
\boldsymbol{\Lambda}_{t}^{1;i} & := & \begin{cases}
\boldsymbol{\eta}^{1;i} & \text{, if }i\in\left\{ 1,\ldots,d\right\} \\
B^{i-d} & \text{, if }i\in\left\{ d+1,\ldots,d+e\right\} 
\end{cases},\nonumber \\
\boldsymbol{\Lambda}_{t}^{2;i,j} & := & \begin{cases}
\boldsymbol{\eta}^{2;i,j} & \text{, if }i,j\in\left\{ 1,\ldots,d\right\} \\
\int_{0}^{t}B_{r}^{i-d}\circ dB_{r}^{j-d} & \text{, if }i,j\in\left\{ d+1,\ldots,d+e\right\} \\
\int_{0}^{t}\eta_{u}^{i}dB_{u}^{j-d} & \text{, if }i\in\left\{ 1,\ldots,d\right\} ,j\in\left\{ 1+d,\ldots,d+e\right\} \\
\eta_{t}^{j}B_{t}^{i-d}-\int_{0}^{t}\eta_{u}^{j}dB_{u}^{i-d} & \text{, if }i\in\left\{ d+1,\ldots,d+e\right\} ,j\in\left\{ 1,\ldots,d\right\} .
\end{cases}\label{eq:lambda2}
\end{eqnarray}
Then $\boldsymbol{\Lambda}_{t}=\left(1+\boldsymbol{\Lambda}_{t}^{1}+\boldsymbol{\Lambda}_{t}^{2}\right)\in1+\mathbb{R}^{d+e}+\left(\mathbb{R}^{d+e}\right)^{\otimes2}$
and a direct calculation shows that $\boldsymbol{\Lambda}_{s,t}:=\boldsymbol{\Lambda}_{s}^{-1}\otimes\boldsymbol{\Lambda}_{t}=\exp\left[\left(\eta_{s,t},B_{s,t}\right)+A_{s,t}\right]$
with%
\footnote{We could define $\boldsymbol{\Lambda}$ directly via (\ref{eq:area})
but the above way might be a bit more intuitive.%
} 
\begin{equation}
so\left(d\right)\ni A_{s,t}^{i,j}=\left\{ \begin{array}{ll}
\frac{1}{2}\left(\int_{s}^{t}\eta_{s,u}^{i}d\eta_{u}^{j}-\int_{s}^{t}\eta_{s,u}^{j}d\eta_{u}^{i}\right) & \text{ , if }i,j\in\left\{ 1,\ldots,d\right\} \\
\frac{1}{2}\left(\int_{s}^{t}B_{s,u}^{i-d}dB_{u}^{j-d}-\int_{s}^{t}B_{s,u}^{j-d}dB_{u}^{i-d}\right) & \text{ , if }i,j\in\left\{ d+1,\ldots,d+e\right\} \\
\left(\int_{s}^{t}\eta_{s,u}^{i}dB_{u}^{j-d}-\frac{1}{2}\eta_{s,t}^{i}B_{s,t}^{j-d}\right) & \text{ , if }i\in\left\{ 1,\ldots,d\right\} ,j\in\left\{ 1+d,\ldots,d+e\right\} \\
\left(-\int_{0}^{t}\eta_{u}^{j}dB_{u}^{i-d}+\frac{1}{2}\eta_{s,t}^{j}B_{s,t}^{i-d}\right) & \text{ , if }i\in\left\{ d+1,\ldots,d+e\right\} ,j\in\left\{ 1,\ldots,d\right\} .
\end{array}\right.\label{eq:area}
\end{equation}
That is, (after throwing away a null-set depending on $B$ and $\boldsymbol{\eta}$)
we have shown that $t\mapsto\boldsymbol{\Lambda}_{t}$ is a continuous
path that takes values in $G^{2}\left(\mathbb{R}^{d+e}\right)$. It
remains to demonstrate that $\left\Vert \boldsymbol{\Lambda}\right\Vert _{\alpha'-H\ddot{o}l}<\infty$
for any $\alpha'<\alpha$ which is then enough to conclude that $\boldsymbol{\Lambda}\in\mathcal{C}^{0,\alpha'}\left(\mathbb{R}^{d+e}\right)$
$\mathbb{P}$-a.s.~for $\alpha'<\alpha$ due to the embedding of
weak geometric rough paths into geometric rough paths ($\mathcal{C}^{\beta}\subset\mathcal{C}^{0,\beta'}$
for $\beta>\beta'$ follows from \cite[Theorem 19]{friz-victoir-04-Note}).
We show this Hölder regularity by proving a stronger statement, namely
that $\left\Vert \boldsymbol{\Lambda}\right\Vert {}_{\alpha'}$ has
Gaussian tails. It is clear that the first level of $\boldsymbol{\Lambda}$
is $\alpha$-Hölder; to deal with the second level note that $\left|A_{st}\right|\sim\sum_{ij}\left|A_{st}^{ij}\right|$
and that $2\alpha-$Hölder regularity already holds for the first
two cases, that is $i,j\in\left\{ 1,\ldots,d\right\} $ and $i,j\in\left\{ d+1,\ldots,d+e\right\} $
(in fact even a Gauss tail via the Fernique estimate for rough path
norms \cite{MR2661566}). Now for the remaining case we have from
above definition of $A^{i,j}$ that 
\begin{eqnarray*}
\left|A_{s,t}^{i,j}\right| & \leq & \left|\int_{s}^{t}\eta_{s,u}^{i}dB_{u}^{j}\right|+\frac{1}{2}\left|\eta_{s,t}^{i}B_{s,t}^{j}\right|
\end{eqnarray*}
and since $\left|\eta_{s,t}^{i}B_{s,t}^{j}\right|\leq\left|\eta\right|_{\alpha-H\ddot{o}l}\left|B\right|_{\alpha-H\ddot{o}l}\left|t-s\right|^{2\alpha}$
it just remains to treat $\left|\int_{s}^{t}\eta_{s,u}^{i}dB_{u}^{j}\right|$.
But since for each $s<t$, $\int_{s}^{t}\eta_{s,u}^{i}dB_{u}^{j}$
has the same distribution as $\sqrt{\int_{s}^{t}\left(\eta_{s,u}^{i}\right)^{2}du}.Z$
for some fixed $Z\sim\mathcal{N}\left(0,1\right)$, we also have
\[
\exp\left[\kappa\left(\frac{\left|\int_{s}^{t}\left(\eta_{s,u}^{i}\right)dB_{u}^{j}\right|}{\left(t-s\right)^{2\alpha}}\right)^{2}\right]=^{Law}\exp\left[\kappa Z^{2}\left(\frac{\sqrt{\left|\int_{s}^{t}\left(\eta_{s,u}^{i}\right)^{2}du\right|}}{\left(t-s\right)^{2\alpha}}\right)^{2}\right]
\]
and by the elementary estimate $\left|\int_{s}^{t}\left(\eta_{s,u}^{i}\right)^{2}du\right|\leq\left\Vert \eta\right\Vert _{\alpha-H\ddot{o}l}^{2}\left(t-s\right)^{2\alpha+1}$
we can conclude by taking $\sup_{s<t}\mathbb{E}$ in above expression
and using the Gaussian integrability for $Z$, i.e.~there exists
a $\kappa>0$ such that 
\[
\sup_{s<t}\mathbb{E}\exp\left[\kappa Z^{2}\left|\eta\right|_{\alpha-H\ddot{o}l}^{2}\left(t-s\right)^{1-2\alpha}\right]<\infty.
\]
By \cite[Theorem A.19]{friz-victoir-book} this yields the desired
$2\alpha'$-Hölder regularity of $\left|\int_{s}^{t}\eta_{s,u}^{i}dB_{u}^{j}\right|$
for any $\alpha'<\alpha$. Putting everything together, we have shown
that $\left\Vert \boldsymbol{\Lambda}\right\Vert _{\alpha'-H\ddot{o}l}<\infty$,
$\mathbb{P}$-a.s. In fact we even have shown 
\[
\sup_{\left\Vert \boldsymbol{\eta}\right\Vert {}_{\alpha-H\ddot{o}l}\le r}\mathbb{E}\exp\left(\delta\left\Vert \boldsymbol{\Lambda}^{\boldsymbol{\eta}}\right\Vert {}_{\alpha'-H\ddot{o}l}^{2}\right)<\infty,\,\,\,\forall\alpha'<\alpha.
\]
 It remains to show the claimed Lipschitz continuity of the map $\boldsymbol{\eta}\mapsto\boldsymbol{\Lambda}^{\boldsymbol{\eta}}$.
Therefore let $q\ge q_{0}\left(\alpha,\alpha'\right)$, as given in
\cite[Theorem A.13]{friz-victoir-book}, take $\boldsymbol{\eta},\bar{\boldsymbol{\eta}}\in\mathcal{C}^{0,\alpha}$
with $\left\Vert \boldsymbol{\eta}\right\Vert {}_{\alpha-H\ddot{o}l},\left\Vert \bar{\boldsymbol{\eta}}\right\Vert {}_{\alpha-H\ddot{o}l}\le r$
and denote the corresponding lifts $\boldsymbol{\Lambda}^{\boldsymbol{\eta}},\boldsymbol{\Lambda}^{\overline{\boldsymbol{\eta}}}$.
Set $\varepsilon:=\rho_{\alpha'-H\ddot{o}l}\left(\boldsymbol{\Lambda}^{\boldsymbol{\eta}},\boldsymbol{\Lambda}^{\overline{\boldsymbol{\eta}}}\right)$.
By (\ref{eq:Gauss tail lambda}) there exists a constant $c_{1}=c_{1}\left(q,r\right)$
such that (we denote the Carnot--Caratheodory metric on $G_{d+e}^{2}$
with $d_{CC}$) 
\[
\left|d_{CC}\left(\boldsymbol{\Lambda}_{s}^{\boldsymbol{\eta}},\boldsymbol{\Lambda}_{t}^{\boldsymbol{\eta}}\right)\right|_{L^{q}\left(\Omega;\mathbb{R}\right)}^{q},\left|d_{CC}\left(\boldsymbol{\Lambda}_{s}^{\overline{\boldsymbol{\eta}}},\boldsymbol{\Lambda}_{t}^{\overline{\boldsymbol{\eta}}}\right)\right|_{L^{q}\left(\Omega;\mathbb{R}\right)}^{q}\le c_{1}\left|t-s\right|{}^{\alpha q}.
\]
Moreover 
\begin{align*}
\left|\pi_{1}\left(\boldsymbol{\Lambda}_{s,t}^{\boldsymbol{\eta}}-\boldsymbol{\Lambda}_{s,t}^{\overline{\boldsymbol{\eta}}}\right)\right|_{L^{q}\left(\Omega;\mathbb{R}\right)}^{q} & =\left|\eta_{s,t}-\bar{\eta}_{s,t}\right|{}^{q}\\
 & \le\varepsilon^{q}\left|t-s\right|{}^{\alpha q},
\end{align*}
and (again the constants $c$ may only depend on $r$ and $q$) 
\begin{align*}
\left|\pi_{2}\left(\boldsymbol{\Lambda}_{s,t}^{\boldsymbol{\eta}}-\boldsymbol{\Lambda}_{s,t}^{\overline{\boldsymbol{\eta}}}\right)\right|_{L^{q/2}\left(\Omega;\mathbb{R}\right)}^{2/q} & =\left|\frac{1}{2}\pi_{1}\left(\boldsymbol{\Lambda}_{s,t}^{\boldsymbol{\eta}}-\boldsymbol{\Lambda}_{s,t}^{\overline{\boldsymbol{\eta}}}\right)\otimes\pi_{1}\left(\boldsymbol{\Lambda}_{s,t}^{\boldsymbol{\eta}}-\boldsymbol{\Lambda}_{s,t}^{\overline{\boldsymbol{\eta}}}\right)+A_{s,t}-\bar{A}_{s,t}\right|_{L^{q/2}\left(\Omega;\mathbb{R}\right)}^{2/q}\\
 & \le c\left(\varepsilon^{q/2}\left|t-s\right|{}^{\alpha q}+\left|A_{s,t}-\bar{A}_{s,t}\right|_{L^{q/2}\left(\Omega;\mathbb{R}\right)}^{2/q}\right)
\end{align*}
Also, 
\begin{eqnarray*}
 &  & \left|\int_{s}^{t}\eta_{s,u}^{i}d\eta_{u}^{j}-\int_{s}^{t}\eta_{s,u}^{j}d\eta_{u}^{i}-\left(\int_{s}^{t}\bar{\eta}_{s,u}^{i}d\bar{\eta}_{u}^{j}-\int_{s}^{t}\bar{\eta}_{s,u}^{j}d\bar{\eta}_{u}^{i}\right)\right|_{L^{q/2}\left(\Omega;\mathbb{R}\right)}^{2/q}\\
 & = & \left|\int_{s}^{t}\eta_{s,u}^{i}d\eta_{u}^{j}-\int_{s}^{t}\eta_{s,u}^{j}d\eta_{u}^{i}-\left(\int_{s}^{t}\bar{\eta}_{s,u}^{i}d\bar{\eta}_{u}^{j}-\int_{s}^{t}\bar{\eta}_{s,u}^{j}d\bar{\eta}_{u}^{i}\right)\right|^{2/q}\\
 & \le & \varepsilon^{q/2}\left|t-s\right|{}^{\alpha q},
\end{eqnarray*}
and finally 
\begin{eqnarray*}
 &  & \left|\int_{s}^{t}\eta_{s,u}^{i}dB_{u}^{j-d}-\frac{1}{2}\eta_{s,t}^{i}B_{s,t}^{j-d}-\left(\int_{s}^{t}\bar{\eta}_{s,u}^{i}dB_{u}^{j-d}-\frac{1}{2}\bar{\eta}_{s,t}^{i}B_{s,t}^{j-d}\right)\right|_{L^{q/2}\left(\Omega;\mathbb{R}\right)}^{2/q}\\
 & \leq & c\left|\int_{s}^{t}\eta_{s,u}^{i}-\bar{\eta}_{s,u}^{i}dB_{u}^{j-d}\right|_{L^{q/2}\left(\Omega;\mathbb{R}\right)}^{2/q}+c\left|\eta_{s,t}^{i}B_{s,t}^{j-d}-\bar{\eta}_{s,t}^{i}B_{s,t}^{j-d}\right|_{L^{q/2}\left(\Omega;\mathbb{R}\right)}^{2/q}\\
 & \le & c\left|\int_{s}^{t}\left|\eta_{s,u}^{i}-\bar{\eta}_{s,u}^{i}\right|^{2}du\right|^{q/4}+c\left|\eta_{s,t}^{i}-\eta_{s,t}^{i}\right|{}^{q/2}\left|B_{s,t}^{j-d}\right|_{L^{q/2}\left(\Omega;\mathbb{R}\right)}^{2/q}\\
 & \le & c\varepsilon^{q/2}\left|t-s\right|{}^{\alpha q/2+q/4}+c\varepsilon^{q/2}\left|t-s\right|{}^{\alpha q/2}\left|t-s\right|{}^{q/4}\\
 & \le & c\varepsilon^{q/2}\left|t-s\right|{}^{\alpha q}.
\end{eqnarray*}
Hence, 
\[
\left|\pi_{2}\left(\boldsymbol{\Lambda}_{s,t}^{\boldsymbol{\eta}}-\boldsymbol{\Lambda}_{s,t}^{\overline{\boldsymbol{\eta}}}\right)\right|_{L^{q/2}\left(\Omega;\mathbb{R}\right)}^{2/q}\le c_{2}\varepsilon^{q/2}\left|t-s\right|{}^{\alpha q}
\]
and applied with $m=m\left(r,q\right):=\max\left\{ 1,c_{1}^{1/q},c_{2}^{1/(2q)}\right\} $
we have $\forall q\ge1$ that 
\begin{eqnarray*}
\left|d_{CC}\left(\boldsymbol{\Lambda}_{s}^{\boldsymbol{\eta}},\boldsymbol{\Lambda}_{t}^{\overline{\boldsymbol{\eta}}}\right)\right|_{L^{q}\left(\Omega;\mathbb{R}\right)} & \le & m\left|t-s\right|{}^{\alpha},\\
\left|\pi_{1}\left(\boldsymbol{\Lambda}_{s,t}^{\boldsymbol{\eta}}-\boldsymbol{\Lambda}_{s,t}^{\overline{\boldsymbol{\eta}}}\right)\right|_{L^{q}\left(\Omega;\mathbb{R}\right)} & \le & \varepsilon m\left|t-s\right|{}^{\alpha},\\
\left|\pi_{2}\left(\boldsymbol{\Lambda}_{s,t}^{\boldsymbol{\eta}}-\boldsymbol{\Lambda}_{s,t}^{\overline{\boldsymbol{\eta}}}\right)\right|_{L^{q/2}\left(\Omega;\mathbb{R}\right)} & \le & \varepsilon m^{2}\left|t-s\right|{}^{2\alpha}.
\end{eqnarray*}
By \cite[Theorem A.13 (i)]{friz-victoir-book} there exists a $q$
large enough and a constant $k=k\left(\alpha,\alpha',T,q\right)$
such that 
\[
\left|\sup_{s<t}\frac{\left|\pi_{1}\left(\boldsymbol{\Lambda}_{s,t}^{\boldsymbol{\eta}}-\boldsymbol{\Lambda}_{s,t}^{\overline{\boldsymbol{\eta}}}\right)\right|}{\left|t-s\right|{}^{\alpha'}}\right|_{L^{q}\left(\Omega;\mathbb{R}\right)}\le\varepsilon km\text{ and }\left|\sup_{s<t}\frac{\left|\pi_{2}\left(\boldsymbol{\Lambda}_{s,t}^{\boldsymbol{\eta}}-\boldsymbol{\Lambda}_{s,t}^{\overline{\boldsymbol{\eta}}}\right)\right|}{\left|t-s\right|{}^{\alpha'}}\right|_{L^{q/2}\left(\Omega;\mathbb{R}\right)}\le\varepsilon\left(km\right){}^{2}
\]
Using this with $q$ and $2q$ we get from the definition of $\rho_{\alpha'-H\ddot{o}l}$
that 
\[
\left|\rho_{\alpha'-H\ddot{o}l}\left(\boldsymbol{\Lambda}^{\boldsymbol{\eta}},\boldsymbol{\Lambda}^{\overline{\boldsymbol{\eta}}}\right)\right|_{L^{q}\left(\Omega;\mathbb{R}\right)}\le\left|\sup_{s\not=t}\frac{\left|\pi_{1}\left(\boldsymbol{\Lambda}_{s,t}^{\boldsymbol{\eta}}-\boldsymbol{\Lambda}_{s,t}^{\overline{\boldsymbol{\eta}}}\right)\right|}{\left|t-s\right|{}^{\alpha'}}\right|_{L^{q}\left(\Omega;\mathbb{R}\right)}+\left|\sup_{s\not=t}\frac{\left|\pi_{2}\left(\boldsymbol{\Lambda}_{s,t}^{\boldsymbol{\eta}}-\boldsymbol{\Lambda}_{s,t}^{\overline{\boldsymbol{\eta}}}\right)\right|}{\left|t-s\right|{}^{2\alpha'}}\right|_{L^{q}\left(\Omega;\mathbb{R}\right)}\le c_{Lip}\varepsilon
\]
In the above argument we assumed that $q$ is large enough, but since
$L^{p}$ is Lipschitz continuously embedded in $L^{q}$ for $p>q$,
the result follows for all $q$. 

Above arguments imply (i) and (ii). We now establish point (iii).
Denote with $\left(\boldsymbol{N},\boldsymbol{B}\right)$ the usual
Stratonovich lift of the $\left(d+e\right)$-dimensional, continuous
semimartingale $\left(N,B\right)$ carried on the probability space
\[
\left(\hat{\Omega},\hat{\mathcal{F}},\hat{\mathcal{F}}_{t},\hat{\mathbb{P}}\right)=\left(\overline{\Omega}\times\Omega,\overline{\mathcal{F}}_{t}\otimes\mathcal{F}_{t},\overline{\mathcal{F}}\otimes\mathcal{F},\overline{\mathbb{P}}\otimes\mathbb{P}\right).
\]
We need to compare this lift for $\overline{\mathbb{P}}-a.e.$~$\overline{\omega}$
with the process $\boldsymbol{\Lambda}^{\boldsymbol{N}\left(\overline{\omega}\right)}$
defined on $\left(\Omega,\mathcal{F}_{t},\mathcal{F},\mathbb{P}\right)$.
Note that we cannot use that $\hat{\omega}=\left(\overline{\omega},\omega\right)\mapsto\boldsymbol{\Lambda}^{\boldsymbol{N}\left(\overline{\omega}\right)}\left(\omega\right)$
is a random variable on $\hat{\Omega}$, since the above argument
does not imply $\hat{\mathcal{F}}$-measurability (i.e.~ joint measurability
in $\left(\overline{\omega},\omega\right)$). However, for the components
of the first level, this is easily seen to be true: we immediately
get by the construction of $\boldsymbol{\Lambda}^{\boldsymbol{N}}$
that $\left(\overline{\omega},\omega\right)\mapsto\pi_{1}\left(\left(\boldsymbol{\Lambda}^{\boldsymbol{N}\left(\overline{\omega}\right)}\right)\left(\omega\right)\right)$
is $\hat{\mathcal{F}}$-measurable and that it coincides with $\pi_{1}\left(\left(\boldsymbol{N},\boldsymbol{B}\right)\right)$
$\hat{\mathbb{P}}$-a.s. It remains to consider the second level and
we only discuss the case $i\in\left\{ 1,\dots,d\right\} $ and $j\in\left\{ d+1,\dots,d+e\right\} $
(the other cases follow either immediately or by a similar argument).
To avoid confusion about probability space on which the involved stochastic
integrals are constructed, we use the notation $d_{\hat{\mathbb{P}}}$
resp.~$d_{\mathbb{P}}$. By definition of the Stratonovich lift,
\[
\left(\boldsymbol{N},\boldsymbol{B}\right){}_{.}^{(2);i,j}=\int_{0}^{.}N_{r}^{i}\circ d_{\hat{\mathbb{P}}}B_{r}^{j-d},\,\hat{\mathbb{P}}-a.s.
\]
and since by assumption the components of $N$ are independent of
$B$, above It\={o}-integral coincides with the It\={o}-version $\int_{0}^{.}N_{r}^{i}d_{\hat{\mathbb{P}}}B_{r}^{j-d}$.
By standard results 
\[
\left(\int_{0}^{t}N_{r}^{i}d_{\mathbb{\hat{\mathbb{P}}}}B_{r}^{j-d}\right)\left(\hat{\omega}\right)=\lim_{n\to\infty}\sum_{k=1}^{2^{n}}N_{(k-1)/2^{n}t}^{i}\left(\overline{\omega}\right)\left[B_{k/2^{n}t}^{j-d}\left(\omega\right)-B_{(k-1)/2^{n}t}^{j-d}\left(\omega\right)\right]\,\,\forall t
\]
holds for all $\hat{\omega}=\left(\overline{\omega},\omega\right)$
in some subset $\hat{A}\subset\hat{\mathcal{F}}$ of full measure,
$\hat{\mathbb{P}}\left[\hat{A}\right]=1$. By a Fubini type theorem
(e.g.~\cite[Theorem 3.4.1]{bibBogachev}), there exists a subset
$\overline{\Omega}^{\circ}\subset\overline{\Omega}$ of full measure,
such that for every $\overline{\omega}\in\overline{\Omega}^{\circ}$
the projection $\hat{A}_{\overline{\omega}}:=\left\{ \omega\in\Omega:\left(\overline{\omega},\omega\right)\in\hat{A}\right\} $
satisfies $\mathbb{P}\left[\hat{A}_{\overline{\omega}}\right]=1$.
On the other hand for every fixed $\overline{\omega}\in\overline{\Omega}$
\[
\boldsymbol{\Lambda}_{.}^{(2);i,j}\left(\boldsymbol{N}\left(\overline{\omega}\right)\right)=\int_{0}^{.}N_{r}^{i}\left(\overline{\omega}\right)d_{\mathbb{P}}B_{r}^{j-d}\,\,\,\mathbb{P}-a.s.
\]
and 
\[
\left(\int_{0}^{t}N_{r}^{i}\left(\overline{\omega}\right)d_{\mathbb{P}}B_{r}^{j-d}\right)\left(\omega\right)=\lim_{m\to\infty}\sum_{k=1}^{2^{n_{m}}}N_{(k-1)/2^{n_{m}}t}^{i}\left(\overline{\omega}\right)\left[B_{k/2^{n_{m}}t}^{j-d}\left(\omega\right)-B_{(k-1)/2^{n_{m}}t}^{j-d}\left(\omega\right)\right]
\]
for every $\omega\in D_{\overline{\omega}}$ where $D_{\overline{\omega}}\subset\Omega$
is a set of full measure, $\mathbb{P}\left[D_{\overline{\omega}}\right]=1$.
($D_{\overline{\omega}}$ as well as the subsequence $\left(n_{m}\right){}_{m}$
depends on $\overline{\omega}$). So for $\overline{\omega}\in\overline{\Omega}^{\circ},\omega\in\hat{A}_{\overline{\omega}}\cap D_{\overline{\omega}}$
we have that 
\begin{align*}
\left(\int_{0}^{t}N_{r}^{i}d_{\hat{\mathbb{P}}}B_{r}^{j-d}\right)\left(\overline{\omega},\omega\right) & =\lim_{n\to\infty}\sum_{k=1}^{2^{n}}N_{(k-1)/2^{n}t}^{i}\left(\overline{\omega}\right)\left[B_{k/2^{n}t}^{j-d}\left(\omega\right)-B_{(k-1)/2^{n}t}^{j-d}\left(\omega\right)\right]\\
 & =\lim_{m\to\infty}\sum_{k=1}^{2^{n_{m}}}N_{(k-1)/2^{n_{m}}t}^{i}\left(\overline{\omega}\right)\left[B_{k/2^{n_{m}}t}^{j-d}\left(\omega\right)-B_{(k-1)/2^{n_{m}}t}^{j-d}\left(\omega\right)\right]\\
 & =\left(\int_{0}^{t}N_{r}^{i}\left(\overline{\omega}\right)d_{\mathbb{P}}B_{r}^{j-d}\right)\left(\omega\right).
\end{align*}
The second equality holds since the sum converges along $n$, hence
also along any subsequence $\left(n_{m}\right){}_{m}$. Noting that
for every $\overline{\omega}\in\overline{\Omega}^{\circ}$ we have
$\mathbb{P}\left[\hat{A}_{\overline{\omega}}\cap D_{\overline{\omega}}\right]=1$
we can conclude that for $\overline{\mathbb{P}}$-a.e.~$\overline{\omega}\in\overline{\Omega}$
\begin{equation}
\left(\boldsymbol{N}\boldsymbol{,B}\right)_{.}=\boldsymbol{\Lambda}_{.}^{\boldsymbol{N}(\overline{\omega})}\,\,\,\mathbb{P}\text{-a.s.}\label{eqLiftEqualsLift-1}
\end{equation}
\end{proof}
\begin{rem}
The null-set on which the equality (\ref{eq:consistent}) holds depends
on $\boldsymbol{\eta}\in\mathcal{C}^{0,\alpha}\left(\mathbb{R}^{d+e}\right)$
(and the version of the stochastic process $B$), i.e.~the map $\boldsymbol{\eta}\mapsto\boldsymbol{\Lambda}^{\boldsymbol{\eta}}$
can be quite ``ugly'' from a measure-theoretic point of view. However,
Theorem \ref{thm:joint lift} shows that after taking expectations
(resp.~$L^{q}$ norms) this map is actually quite regular and we
will see that this is sufficient for important applications (Section
\ref{sec:differential-equations-with} and \ref{sec:Applications}).
\begin{rem}
In the construction we use the fact that the bracket between $N$
and $B$ is $0$. Especially, the consistency in Theorem \ref{thm:joint lift}
is only true for independent processes $N$ and $B$.
\begin{rem}
Lyons \cite{STMAZ.00008236} constructs a two-dimensional Gaussian
process such that its marginals are Brownian motions and shows that
for several different definitions of It\={o} and Stratonovich integrals
(as limit of Riemann sums, Fourier series approach) the cross-integrals
are only defined on a null-set. This does not contradict Theorem \ref{thm:joint lift}
due to the previous remark/the assumption of independence.%
\footnote{It is even not obvious if the process in \cite{STMAZ.00008236} has
a bracket.%
}
\begin{rem}
They key observation for the proof of Theorem \ref{thm:joint lift}
is that by assuming an integration by parts formula holds, the cross
integral can be implicitly defined. Especially, definition (\ref{eq:lambda2})
still makes sense if we replace Stratonovich by It\={o} integration
and one can run the above argument to arrive at a rough path lift
$\boldsymbol{\Lambda}^{Ito,\boldsymbol{\eta}}$ that is now a non-geometric
rough path (to be specific, one only needs to slightly change the
Fernique argument to account for the It\={o}--Stratonovich correction).
The proof of consistency follows also as above. Unfortunately, for
the application in non-linear filtering given in Section \ref{sec:Applications},
this does not lead to better results regarding the regularity of the
vector fields in the filtering problem.
\end{rem}
\end{rem}
\end{rem}
\end{rem}

\section{\label{sec:differential-equations-with}Rough and stochastic differential
equations (RSDEs)}

Our goal is to give meaning to the differential equation 
\[
dS_{t}^{\boldsymbol{\eta}}=a\left(S_{t}^{\boldsymbol{\eta}}\right)dt+b\left(S_{t}^{\boldsymbol{\eta}}\right)\circ dB_{t}+c\left(S_{t}^{\boldsymbol{\eta}}\right)d\boldsymbol{\eta}_{t},
\]
i.e.~for a fixed rough path $\boldsymbol{\eta}$ we want to find
a stochastic process $S^{\boldsymbol{\eta}}$ on the probability space
which carries the Brownian motion $B$. Theorem \ref{thm:joint lift}
guarantees the existence of a canonical, random joint lift $\boldsymbol{\Lambda}$
of $B$ and $\boldsymbol{\eta}$, hence we can solve for every fixed
rough path $\boldsymbol{\eta}\in\mathcal{C}_{d}^{0,\alpha}$ the random
RDE 
\begin{eqnarray*}
dS_{t} & = & a\left(S_{t}\right)dt+\left(b,c\right)\left(S_{t}\right)d\boldsymbol{\Lambda}_{t}\\
 & = & \left(a,b,c\right)\left(S_{t}\right)d\left(t,\boldsymbol{\Lambda}_{t}\right).
\end{eqnarray*}
Theorem \ref{thmRoughSDE} shows that this is indeed the right solution
in terms of consistent approximation results as well as continuity
of the solution map; Theorem \ref{thmRoughSDE'} shows consistency
with usual SDE solution in the case that $\boldsymbol{\eta}$ is the
rough path lift of another Brownian motion. Before we give the proofs
let us introduce some standard notation.
\begin{defn}
Let $\left(\Omega,\mathcal{F},\mathcal{F}_{t},\mathbb{P}\right)$
be a filtered probability space satisfying the usual condition. Denote
with $\mathcal{S}^{0}\left(\Omega\right)$ the space of adapted, continuous
processes in $\mathbb{R}^{d_{S}}$, with the topology of uniform convergence
in probability. For $q\ge1$ we denote with $\mathcal{S}^{q}\left(\Omega\right)$
the space of processes $X\in\mathcal{S}^{0}$ such that
\[
\left|X\right|{}_{\mathcal{S}^{q}}:=\left|\left|X\right|_{\infty;\left[0,t\right]}\right|_{L^{q}\left(\Omega;\mathbb{R}\right)}=\left(\mathbb{E}\left[\sup_{s\le t}\left|X_{s}\right|{}^{q}\right]\right)^{1/q}<\infty.
\]

\end{defn}

\subsection{Existence and continuity of the solution map}
\begin{assumption}
\label{assumption} $a\in Lip^{1+\epsilon}$ for some $\epsilon>0$,
$\alpha\in\left(\frac{1}{3},\frac{1}{2}\right)$, $\gamma>\frac{1}{\alpha}$
and $b,c\in Lip^{\gamma}$. 
\begin{thm}
\label{thmRoughSDE}Let $\left(\Omega,\mathcal{F},\mathcal{F}_{t},\mathbb{P}\right)$
be a filtered probability space satisfying the usual conditions, carrying
a $e$-dimensional Brownian motion $B$ and a random variable $S_{0}$
independent of $B$. Let $\left(a,b,c\right)$ fulfill Assumption
\ref{assumption} for some $\alpha\in\left(\frac{1}{3},\frac{1}{2}\right)$.
Then there exists a $d_{S}$-dimensional process $S^{\boldsymbol{\eta}}\in\mathcal{S}^{0}$
such that for every sequence $\left(\eta^{n}\right)_{n}$, $\eta^{n}\in C^{1}\left(\left[0,T\right],\mathbb{R}^{d}\right)$
and such that $\left(1+\eta^{n}+\int\eta^{n}\otimes d\eta^{n}\right)\rightarrow_{n}\boldsymbol{\eta}$
in $\rho_{\alpha-H\ddot{o}l}$-metric for some $\boldsymbol{\eta}\in\mathcal{C}^{0,\alpha}$,
the solutions $\left(S^{n}\right)_{n}$ of the SDE 
\begin{align*}
dS_{t}^{n} & =a\left(S_{t}^{n}\right)dt+b\left(S_{t}^{n}\right)\circ dB_{t}+c\left(S_{t}^{n}\right)d\eta_{t}^{n},\, S^{n}\left(0\right)=S_{0}^{n}.
\end{align*}
converge uniformly on compacts in probability to $S^{\boldsymbol{\eta}}$,
\begin{equation}
S^{n}\to_{n\rightarrow\infty}S^{\boldsymbol{\eta}}\text{ in }\mathcal{S}^{0}\label{eq:approximation solution}
\end{equation}
and the process $S^{\boldsymbol{\eta}}$ only depends on $\boldsymbol{\eta}$
and the process $B$ but not on the approximating sequence $\left(\eta^{n}\right)_{n}$.
We say that $S^{\boldsymbol{\eta}}$ is the solution of the RSDE 
\[
S_{t}^{\boldsymbol{\eta}}=S_{0}+\int_{0}^{t}a\left(S_{r}^{\boldsymbol{\eta}}\right)dr+\int_{0}^{t}b\left(S_{r}^{\boldsymbol{\eta}}\right)\circ dB_{r}+\int_{0}^{t}c\left(S_{r}^{\boldsymbol{\eta}}\right)d\boldsymbol{\eta}_{r}.
\]
Moreover, \end{thm}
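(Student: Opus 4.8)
The key idea is to transfer everything to the rough-path side using the joint lift $\boldsymbol{\Lambda}$ from Theorem \ref{thm:joint lift}, and then invoke the standard stability theory of RDEs. Concretely, I would \emph{define} $S^{\boldsymbol{\eta}}$ pathwise (in $\omega$) as the unique RDE solution of
\[
dS_{t}=\left(a,b,c\right)\left(S_{t}\right)d\left(t,\boldsymbol{\Lambda}_{t}^{\boldsymbol{\eta}}\right),\qquad S_{0}=S_{0},
\]
which is well-posed for a.e.\ $\omega$ since Assumption \ref{assumption} gives $(a,b,c)\in Lip^{\gamma}$ with $\gamma>1/\alpha>1/\alpha'$ and $\boldsymbol{\Lambda}^{\boldsymbol{\eta}}\in\mathcal{C}^{0,\alpha'}$ a.s.; the drift term in $dt$ is handled by the usual joint-lift-with-time construction (integration by parts, no area needed). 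Adaptedness of $S^{\boldsymbol{\eta}}$ to $\sigma(B)$ (jointly with $S_0$) follows because, for each fixed $\boldsymbol{\eta}$, the construction (\ref{eq:lambda2})--(\ref{eq:area}) exhibits $\boldsymbol{\Lambda}^{\boldsymbol{\eta}}$ as built from $B$ and deterministic data, and the RDE solution map is a measurable (indeed continuous) function of the driving signal; one also uses the Wong--Zakai/It\^o-map continuity to identify $S^{\boldsymbol{\eta}}$ restricted to $[0,t]$ as $\mathcal{F}_t$-measurable.

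\textbf{Convergence of the approximations.} Given $\eta^{n}\in C^{1}$ with $(1+\eta^{n}+\int\eta^{n}\otimes d\eta^{n})\to\boldsymbol{\eta}$ in $\rho_{\alpha-H\ddot{o}l}$, the classical SDE solution $S^{n}$ coincides a.s.\ with the RDE solution driven by the joint Stratonovich lift of $(B,\eta^{n})$, which — since $\eta^{n}$ is smooth — is exactly $\boldsymbol{\Lambda}^{\boldsymbol{\eta}^{n}}$ (here $\boldsymbol{\eta}^{n}$ denotes the canonical lift of $\eta^{n}$); this is the smooth case of consistency built into (\ref{eq:lambda2}). Now I would combine two ingredients: (1) the local-Lipschitz estimate (ii) of Theorem \ref{thm:joint lift}, which gives $\rho_{\alpha'-H\ddot{o}l}(\boldsymbol{\Lambda}^{\boldsymbol{\eta}^{n}},\boldsymbol{\Lambda}^{\boldsymbol{\eta}})\to 0$ in $L^{q}$ (the norms $\|\boldsymbol{\eta}^{n}\|_{\alpha-H\ddot{o}l}$ are bounded since $\boldsymbol{\eta}^{n}\to\boldsymbol{\eta}$); and (2) the continuity of the It\^o--Lyons map $\mathcal{C}^{0,\alpha'}\ni\boldsymbol{\lambda}\mapsto S\in\mathcal{S}^{0}$ — here one wants a version that is locally Lipschitz or at least uniformly continuous on bounded sets of rough-path norm, e.g.\ \cite{friz-victoir-book}. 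Passing along a subsequence where $\rho_{\alpha'-H\ddot{o}l}(\boldsymbol{\Lambda}^{\boldsymbol{\eta}^{n}},\boldsymbol{\Lambda}^{\boldsymbol{\eta}})\to 0$ a.s.\ (from $L^{q}$ convergence) yields $\sup_{s\le t}|S^{n}_{s}-S^{\boldsymbol{\eta}}_{s}|\to 0$ a.s.\ along that subsequence; since the limit $S^{\boldsymbol{\eta}}$ does not depend on the subsequence, the full sequence converges in probability uniformly on compacts, i.e.\ in $\mathcal{S}^{0}$. Independence from the approximating sequence is immediate from this characterization.

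\textbf{Main obstacle.} The delicate point is \emph{not} the rough-path estimate but the measure-theoretic bookkeeping: as the Remark after Theorem \ref{thm:joint lift} stresses, the null-sets in (\ref{eq:consistent}) depend on $\boldsymbol{\eta}$, so $\boldsymbol{\eta}\mapsto\boldsymbol{\Lambda}^{\boldsymbol{\eta}}$ is not a nice measurable map pathwise; one only has regularity after taking $L^{q}$-norms. Thus I cannot argue "fix $\omega$, let $\eta^{n}\to\boldsymbol{\eta}$" — the $\omega$-exceptional sets drift with $n$. The remedy is exactly to work with the $L^{q}$-Lipschitz bound (ii) to get convergence \emph{in probability} of $\boldsymbol{\Lambda}^{\boldsymbol{\eta}^{n}}$, then extract an a.s.-convergent subsequence, apply the pathwise It\^o--Lyons continuity on that subsequence, and finally use the subsequence-independence of the limit to upgrade to convergence in probability of $S^{n}$. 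A secondary technical nuisance is handling the extra $dt$-drift uniformly: since $r\mapsto r$ is of bounded variation this only contributes a locally Lipschitz perturbation of the rough driver and does not affect any of the estimates, but it must be carried along in the norms throughout.
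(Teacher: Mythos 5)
Your construction of $S^{\boldsymbol{\eta}}$ is the same as the paper's (solve the RDE driven by the joint lift $\boldsymbol{\Lambda}^{\boldsymbol{\eta}}$ with drift), and your identification of the smooth-case SDE solution $S^{n}$ with the RDE solution driven by $\boldsymbol{\Lambda}^{\boldsymbol{\eta}^{n}}$ is also the route the paper takes. Where you genuinely diverge is in how the convergence $S^{n}\to S^{\boldsymbol{\eta}}$ is obtained. You use a soft argument: Theorem \ref{thm:joint lift}(ii) gives $L^{q}$-convergence of the lifts, you extract an a.s.\ convergent subsequence, apply the pathwise (local) continuity of the It\^o--Lyons map, and upgrade via the standard subsequence criterion to convergence in probability. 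This is correct and self-contained, and it correctly sidesteps the measure-theoretic pitfall you identify (the $\boldsymbol{\eta}$-dependent null sets). The paper instead proves a \emph{quantitative} bound: it invokes the deterministic estimate of Bayer--Friz--Riedel, $\left|S^{\boldsymbol{\eta}}-S^{\bar{\boldsymbol{\eta}}}\right|_{\infty}\le C\rho_{\alpha'-H\ddot{o}l}\bigl(\boldsymbol{\Lambda}^{\boldsymbol{\eta}},\boldsymbol{\Lambda}^{\bar{\boldsymbol{\eta}}}\bigr)\exp\bigl[c\bigl(N_{1}+\bar N_{1}+1\bigr)\bigr]$, applies Cauchy--Schwarz, and controls the exponential factor uniformly in $\boldsymbol{\eta}$ via Corollary \ref{cor:integrabilityOfJointLift} (the greedy-partition integrability of Section \ref{sec:integrability-and-tail}). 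The payoff of the paper's harder route is local Lipschitz continuity of $\boldsymbol{\eta}\mapsto S^{\boldsymbol{\eta}}$ in $\mathcal{S}^{q}$ for every $q$ --- which is item (1) of the ``Moreover'' and is needed for the filtering application --- whereas your argument only delivers $\mathcal{S}^{0}$-convergence. For the statement as posed (existence, ucp convergence, independence of the approximating sequence) your argument suffices.

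One small inaccuracy: Assumption \ref{assumption} gives $a\in Lip^{1+\epsilon}$ only, not $(a,b,c)\in Lip^{\gamma}$ as you write. If you fold the drift into a joint rough driver $(t,\boldsymbol{\Lambda}_{t})$ and treat $a$ as just another vector field, the standard well-posedness theorem would demand $a\in Lip^{\gamma}$ with $\gamma>1/\alpha'$. The paper avoids this by citing existence and uniqueness results for RDEs \emph{with drift} (\cite[Theorem 12.6 and Theorem 12.10]{friz-victoir-book}), which exploit the bounded-variation nature of the $dt$-component to get away with $a\in Lip^{1+\epsilon}$. You should make the same appeal explicitly; otherwise your well-posedness step quietly assumes more regularity on $a$ than the theorem grants.
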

\begin{enumerate}
\item $\forall q\ge1$, $S^{\boldsymbol{\eta}}\in\mathcal{S}^{q}\left(\Omega\right)$,
the map 
\begin{equation}
\left(\mathcal{C}^{0,\alpha},\rho_{\alpha-H\ddot{o}l}\right)\to\left(\mathcal{S}^{q}\left(\Omega\right),\left|.\right|{}_{\mathcal{S}^{q}}\right),\boldsymbol{\eta}\mapsto S^{\boldsymbol{\eta}}\label{eq:solution map}
\end{equation}
is locally Lipschitz continuous, 
\item If $S_{0}$ has Gaussian tails then $S$ also has Gaussian tails,
locally uniform in $\boldsymbol{\eta}$: $\forall r>0$ $\exists$$\delta=\delta\left(\alpha',\alpha,T,r\right)>0$
such that 
\[
\sup_{\left\Vert \boldsymbol{\eta}\right\Vert {}_{\alpha-H\ddot{o}l}\leq r}\mathbb{E}\left[\exp\left(\delta\left|S^{\boldsymbol{\eta}}\right|_{\infty;[0,T]}^{2}\right)\right]<\infty.
\]

\end{enumerate}
\end{assumption}
\begin{proof}
Choose $\alpha'<\alpha$ large enough, such that $\gamma>1/\alpha'$
and apply standard existence and uniqueness results%
\footnote{We only have $a\in Lip^{1+\epsilon}$ so we have to use results on
RDEs with drift (e.g.~\cite[Theorem 12.6 and Theorem 12.10 ]{friz-victoir-book})
to get existence of a unique solution. %
} to get a solution $S^{\boldsymbol{\eta}}$ of the RDE 
\begin{align}
S_{t}^{\boldsymbol{\eta}} & =S_{0}+\int_{0}^{t}a\left(S_{r}^{\boldsymbol{\boldsymbol{\eta}}}\right)dr+\int_{0}^{t}\left(b,c\right)\left(S_{r}^{\boldsymbol{\eta}}\right)d\boldsymbol{\Lambda}_{r}^{\boldsymbol{\eta}}.\label{eqJointRDE}
\end{align}
(and denote with $\boldsymbol{S}^{\eta}$ the full RDE solution).

\textbf{Point (1) (and (\ref{eq:approximation solution})). }Let $\alpha'<\alpha$
with $\gamma>p':=\frac{1}{\alpha'}$. By Theorem 4 in \cite{bibBayerFrizRiedel}
we have (see Section \ref{sec:integrability-and-tail} for the definition
of $N_{1}\left(\left\Vert \boldsymbol{S}^{\boldsymbol{\eta}}\right\Vert _{p^{\prime}-var}^{p^{\prime}};\left[0,T\right]\right)$)
\[
\left|S^{\boldsymbol{\eta}}-S^{\bar{\boldsymbol{\eta}}}\right|{}_{\infty}\le C\rho_{\alpha'-H\ddot{o}l}\left(S^{\boldsymbol{\eta}},S^{\bar{\boldsymbol{\eta}}}\right)\exp\left[c\left(N_{1}\left(\left\Vert \boldsymbol{S}^{\boldsymbol{\eta}}\right\Vert _{p^{\prime}-var}^{p^{\prime}};\left[0,T\right]\right)+N_{1}\left(\left\Vert \boldsymbol{S}^{\boldsymbol{\overline{\eta}}}\right\Vert _{p^{\prime}-var}^{p^{\prime}};\left[0,T\right]\right)+1\right)\right].
\]
Hence, using Theorem \ref{thm:joint lift} 
\begin{align*}
\left|S^{\boldsymbol{\eta}}-S^{\bar{\boldsymbol{\eta}}}\right|{}_{L^{q}} & \le C\left|\rho_{\alpha'-H\ddot{o}l}\left(S^{\boldsymbol{\eta}},S^{\bar{\boldsymbol{\eta}}}\right)\right|{}_{L^{2q}}\left|\exp\left[c\left(N_{1}\left(\left\Vert \boldsymbol{S}^{\boldsymbol{\eta}}\right\Vert _{p^{\prime}-var}^{p^{\prime}};\left[0,T\right]\right)+N_{1}\left(\left\Vert \boldsymbol{S}^{\boldsymbol{\overline{\eta}}}\right\Vert _{p^{\prime}-var}^{p^{\prime}};\left[0,T\right]\right)+1\right)\right]\right|{}_{L^{2q}}\\
 & \le C\rho_{\alpha-H\ddot{o}l}\left(\boldsymbol{\eta},\bar{\boldsymbol{\eta}}\right)\left|\exp\left[c\left(N_{1}\left(\left\Vert \boldsymbol{S}^{\boldsymbol{\eta}}\right\Vert _{p^{\prime}-var}^{p^{\prime}};\left[0,T\right]\right)+N_{1}\left(\left\Vert \boldsymbol{S}^{\boldsymbol{\overline{\eta}}}\right\Vert _{p^{\prime}-var}^{p^{\prime}};\left[0,T\right]\right)+1\right)\right]\right|{}_{L^{2q}}.
\end{align*}
The last $L^{2q}$-norm is bounded locally uniformly in $\boldsymbol{\eta},\overline{\boldsymbol{\eta}}$
by Corollary \ref{cor:integrabilityOfJointLift}. This yields the
desired local Lipschitzness of the solution map (\ref{eq:solution map}).
Now apply this continuity with the fact that if $\boldsymbol{\eta}$
is the lift of a smooth path $\eta$, then $S^{\boldsymbol{\eta}}$
is the standard SDE solution of the SDE 
\[
dS=a\left(S_{r}\right)dr+b\left(S_{r}\right)dB_{r}+c\left(S_{r}\right)d\eta_{r}.
\]
(e.g.~\cite[Section 17.5]{friz-victoir-book}).

\textbf{Point (2). }This follows from Corollary \ref{cor:integrabilityOfJointLift}
in combination with the pathwise estimates%
\footnote{We use the same notation as in \cite{friz-victoir-book}, $\left|S\right|_{0}\equiv\sup_{s\neq t}\left|S_{t}-S_{s}\right|$%
} 
\begin{align*}
\left|S^{\boldsymbol{\eta}}\right|{}_{\infty}\le\left|S_{0}^{\boldsymbol{\eta}}\right|+\left|S^{\boldsymbol{\eta}}\right|{}_{0} & \le\left|S_{0}^{\boldsymbol{\eta}}\right|+C\left|S^{\boldsymbol{\eta}}\right|{}_{p^{\prime}-\text{var}}\\
 & \le\left|S_{0}^{\boldsymbol{\eta}}\right|+C\left(N_{1}\left(\Lambda^{\boldsymbol{\eta}};\left[0,T\right]\right)+1\right),
\end{align*}
for some constant $C$. The last estimate follows from Lemma 4 and
Corollary 3 in \cite{bibFrizRiedel}.
\end{proof}

\subsection{Consistency with SDE solutions}
\begin{thm}
\label{thmRoughSDE'} Let $\left(\Omega,\mathcal{F},\mathcal{F}_{t},\mathbb{P}\right)$,
$B$,$S_{0}$ and $a,b,c$ be as in Theorem \ref{thmRoughSDE}. Let
$\left(\overline{\Omega},\overline{\mathcal{F}},\overline{\mathcal{F}}_{t},\overline{\mathbb{P}}\right)$
be another probability space satisfying the usual conditions and carrying
an $\overline{e}$-dimensional Brownian motion $\overline{B}$ and
denote 
\[
\left(\hat{\Omega},\hat{\mathcal{F}},\hat{\mathcal{F}}_{t},\hat{\mathbb{P}}\right)=\left(\Omega\times\overline{\Omega},\mathcal{F}\otimes\overline{\mathcal{F}}_{t},\mathcal{F}_{t}\otimes\overline{\mathcal{F}}_{t},\mathbb{P}\otimes\overline{\mathbb{P}}\right).
\]
Let $\hat{S}$ be the unique solution on $\left(\hat{\Omega},\hat{\mathcal{F}},\hat{\mathcal{F}}_{t},\hat{\mathbb{P}}\right)$
of the SDE 
\begin{align}
\hat{S}_{t} & =\hat{S}_{0}+\int_{0}^{t}a\left(\hat{S}_{r}\right)dr+\int_{0}^{t}b\left(\hat{S}_{r}\right)\circ dB_{r}+\int_{0}^{t}c\left(\hat{S}_{r}\right)\circ d\overline{B}_{r}.\label{eqStratonovich}
\end{align}
Denote with $\overline{\boldsymbol{B}}$ the Stratonovich lift of
the Brownian motion $\overline{B}$ on $\left(\overline{\Omega},\overline{\mathcal{F}},\overline{\mathcal{F}}_{t},\overline{\mathbb{P}}\right)$.
Then for $\mathbb{\overline{P}}$-a.e.~$\overline{\omega}\in\overline{\Omega}$
\begin{equation}
\mathbb{P}\left[\hat{S}_{t}\left(\overline{\omega},\cdot\right)=S_{t}^{\overline{\boldsymbol{B}}\left(\overline{\omega}\right)}\left(\cdot\right),t\in\left[0,T\right]\right]=1.\label{eqTheSame}
\end{equation}
\end{thm}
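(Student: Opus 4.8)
The plan is to identify the SDE solution $\hat S$ with a random RDE solution driven by the joint Stratonovich lift, and then invoke the consistency property (iii) of Theorem \ref{thm:joint lift} to replace that joint lift by $\boldsymbol{\Lambda}^{\overline{\boldsymbol B}(\overline\omega)}$ for $\overline{\mathbb P}$-a.e.~$\overline\omega$. First I would recall that, since $B$ (on $\Omega$) and $\overline B$ (on $\overline\Omega$) are independent Brownian motions on the product space $\hat\Omega$, the pair $(B,\overline B)$ is a $(e+\overline e)$-dimensional continuous semimartingale there, and standard Wong--Zakai / rough-path--vs.--Stratonovich results (e.g.~\cite[Section 17.5]{friz-victoir-book}) tell us that $\hat S$ coincides $\hat{\mathbb P}$-a.s.~with the RDE solution $\boldsymbol S$ of
\[
d\hat S_t = a(\hat S_t)\,dt + (b,c)(\hat S_t)\,d(\boldsymbol B,\overline{\boldsymbol B})_t
\]
driven by the joint Stratonovich lift $(\boldsymbol B,\overline{\boldsymbol B})$ of $(B,\overline B)$ (with the trivial bounded-variation time component adjoined). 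This is exactly the RDE that, for a \emph{frozen} rough path $\boldsymbol\eta = \overline{\boldsymbol B}(\overline\omega)$, defines $S^{\boldsymbol\eta}$ in Theorem \ref{thmRoughSDE} via the canonical joint lift $\boldsymbol\Lambda^{\boldsymbol\eta}$.

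Next I would apply part (iii) of Theorem \ref{thm:joint lift} with the roles adjusted to the current labelling: there, the deterministic rough path $\boldsymbol\eta$ is driven by a semimartingale $N$ on $\overline\Omega$ and $\boldsymbol B$ lives on $\Omega$; here the semimartingale on $\overline\Omega$ is $\overline B$, whose Stratonovich lift is $\overline{\boldsymbol B}$. Theorem \ref{thm:joint lift}(iii) then gives a set $\overline\Omega^\circ\subset\overline\Omega$ with $\overline{\mathbb P}[\overline\Omega^\circ]=1$ such that for every $\overline\omega\in\overline\Omega^\circ$,
\[
\boldsymbol\Lambda^{\,\overline{\boldsymbol B}(\overline\omega)}(\omega) = (\overline{\boldsymbol B},\boldsymbol B)(\overline\omega,\omega)\quad\text{for }\mathbb P\text{-a.e.~}\omega,
\]
i.e.~the canonical joint lift of the \emph{realized} path $\overline{\boldsymbol B}(\overline\omega)$ with $B$ agrees a.s.~with the restriction to $\{\overline\omega\}\times\Omega$ of the genuine joint Stratonovich lift. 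Since the RDE solution map (with drift) is a deterministic continuous map of the driving rough path, for each such $\overline\omega$ the random RDE solving \eqref{eqJointRDE} with $\boldsymbol\eta=\overline{\boldsymbol B}(\overline\omega)$ — which is precisely $S^{\overline{\boldsymbol B}(\overline\omega)}$ — coincides for $\mathbb P$-a.e.~$\omega$ with $\boldsymbol S$ evaluated at $(\overline\omega,\omega)$, hence with $\hat S_t(\overline\omega,\omega)$. A Fubini argument (as in the proof of Theorem \ref{thm:joint lift}(iii)) then upgrades the $\hat{\mathbb P}$-a.s.~identity $\hat S = \boldsymbol S^{(\mathrm{joint})}$ to the slicewise statement: for $\overline{\mathbb P}$-a.e.~$\overline\omega$, $\hat S_t(\overline\omega,\cdot)=S_t^{\overline{\boldsymbol B}(\overline\omega)}(\cdot)$ for all $t\in[0,T]$, $\mathbb P$-a.s., which is \eqref{eqTheSame}.

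The main obstacle is the same measure-theoretic subtlety flagged in the Remark after Theorem \ref{thm:joint lift}: the map $\overline\omega\mapsto\boldsymbol\Lambda^{\overline{\boldsymbol B}(\overline\omega)}$ need not be jointly measurable in $(\overline\omega,\omega)$, so one cannot simply say "$\hat S$ and $\omega\mapsto S^{\overline{\boldsymbol B}(\overline\omega)}(\omega)$ are the same random variable on $\hat\Omega$" and be done. The careful point is to run everything through the two a.s.-defined objects that \emph{are} well-behaved — the genuine joint Stratonovich lift $(\overline{\boldsymbol B},\boldsymbol B)$ on $\hat\Omega$ and, for each fixed $\overline\omega$, the lift $\boldsymbol\Lambda^{\overline{\boldsymbol B}(\overline\omega)}$ on $\Omega$ — and use Theorem \ref{thm:joint lift}(iii) as the bridge between them, exactly as in the proof of that theorem. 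One also needs the (routine) facts that, under Assumption \ref{assumption}, both the SDE \eqref{eqStratonovich} and the RDE \eqref{eqJointRDE} have unique solutions and that the RDE solution depends continuously (hence measurably) on the driving signal, so that the identification of $S^{\overline{\boldsymbol B}(\overline\omega)}$ with the $\overline\omega$-slice of $\boldsymbol S$ is legitimate for each $\overline\omega\in\overline\Omega^\circ$.
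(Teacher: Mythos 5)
Your proposal is correct and follows essentially the same route as the paper: identify $\hat S$ with the RDE solution driven by the joint Stratonovich lift $(\overline{\boldsymbol B},\boldsymbol B)$ via \cite[Section 17.5]{friz-victoir-book}, then invoke the consistency statement (iii) of Theorem \ref{thm:joint lift} to equate that lift slicewise with $\boldsymbol{\Lambda}^{\overline{\boldsymbol B}(\overline\omega)}$ for $\overline{\mathbb P}$-a.e.~$\overline\omega$. The extra care you take with joint measurability and the Fubini step is exactly the content already packaged inside the proof of Theorem \ref{thm:joint lift}(iii), which the paper simply cites.
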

\begin{proof}
By Theorem \ref{thm:joint lift}, we know that for $\mathbb{\overline{P}}$-a.e.~$\overline{\omega}\in\overline{\Omega}$
we have 
\begin{equation}
\left(\boldsymbol{\overline{B},B}\right)_{.}=\boldsymbol{\Lambda}_{.}^{\overline{\boldsymbol{B}}\left(\overline{\omega}\right)}\,\,\,\mathbb{P}\text{-a.s.}\label{eqLiftEqualsLift}
\end{equation}
Standard results in rough path theory (cf.~\cite[Section 17.5]{friz-victoir-book}),
guarantee that the RDE solution to (\ref{eqJointRDE}) driven by $\left(\boldsymbol{\overline{B},B}\right)$
coincides $\hat{\mathbb{P}}$-a.s.~with the SDE solution of (\ref{eqStratonovich}).
Combining this with (\ref{eqLiftEqualsLift}) implies immediately
(\ref{eqTheSame}).
\end{proof}

\section{\label{sec:Applications}Applications}

In this section we show that RSDEs, as introduced in Section \ref{sec:differential-equations-with},
appear naturally in robustness questions of two important applications:
nonlinear filtering and stochastic/rough PDEs.

\subsection{Robustness in Nonlinear Filtering}

Nonlinear filtering is concerned with the estimation of a Markov process
based on some observation of it; e.g.~consider the classic case of
a Markov process $\left(X,Y\right)$ that takes values in $\mathbb{R}^{d_{X}+d_{Y}}$
of the form
\begin{equation}
\left\{ \begin{array}{rcll}
dX_{t} & = & l_{0}\left(X_{t},Y_{t}\right)dt+\sum_{k}Z\left(X_{t},Y_{t}\right)dB_{t}^{k}+\sum_{j}L_{j}\left(X_{t},Y_{t}\right)d\tilde{B}_{t}^{j} & \text{ (signal)}\\
dY_{t} & = & h\left(X_{t},Y_{t}\right)dt+d\tilde{B}_{t} & \text{ (observation)}
\end{array}\right.\label{eq:(x,y)}
\end{equation}
 with $B$ and $\tilde{B}$ independent, multidimensional Brownian
motions. The goal is to compute for a given real-valued function $\varphi$
\[
\pi_{t}\left(\varphi\right)=\mathbb{E}\left[\varphi\left(X_{t}\right)\lvert\sigma\left(Y_{r},r\in\left[0,t\right]\right)\right].
\]
From basic measure theory it follows that there exists a measurable
map 
\begin{equation}
\theta_{t}^{\varphi}:C\left(\left[0,T\right],\mathbb{R}^{d_{Y}}\right)\rightarrow\mathbb{R}\,\,\text{ such that }\theta_{t}^{\varphi}\left(Y\lvert_{\left[0,t\right]}\right)=\pi_{t}\left(\varphi\right)\,\,\,\mathbb{P}-a.s\label{eq:cond exp}
\end{equation}
In the late seventies Clark pointed out that this formulation is not
sufficient from a practical point of view: it would be natural to
demand that $\theta_{t}^{\varphi}\left(.\right)$ is continuous%
\footnote{The functional $\phi_{t}^{\varphi}$ is only uniquely defined up to
null-sets on pathspace but only discrete observations of $Y$ are
available. Moreoever, the model choosen for the observation process
might only be close in law to the ``real-world'' observation process.%
}. Clark showed that in the uncorrelated noise case (i.e.~$L\equiv0$
in \ref{eq:(x,y)}) there exists a unique 
\[
\overline{\theta}_{t}^{\varphi}:C\left(\left[0,T\right],\mathbb{R}^{d_{Y}}\right)\rightarrow\mathbb{R}
\]
which is continuous in uniform norm and fulfills (\ref{eq:cond exp}),
thus providing a ``robust version'' of the conditional expectation
$\pi_{t}\left(\varphi\right)$. Unfortunately, in the correlated noise
case this is no longer true (it is easy to construct counterexamples;
see \cite[Example 1]{CrisanDiehlFrizOberhauser}). Recently, it was
shown in \cite{CrisanDiehlFrizOberhauser} that also in this situation
robustness prevails, however only in a rough path sense, i.e.~there
exists a map 
\[
\overline{\theta}_{t}^{\varphi}:\mathcal{C}^{0,\alpha}\left(\mathbb{R}^{d_{Y}}\right)\rightarrow\mathbb{R}\text{ such that }\overline{\theta}_{t}^{\varphi}\left(\boldsymbol{Y}\lvert_{\left[0,t\right]}\right)=\pi_{t}\left(\varphi\right)\,\,\,\mathbb{P}-a.s
\]
here $\boldsymbol{Y}$ is the canonical rough path lift of the semimartingale
$Y$. The argument in \cite{CrisanDiehlFrizOberhauser} relies on
an observation of Mark Davis \cite{DavisRobustness}, namely that
under an appropriate change of measure the observation $Y$ is a Brownian
motion independent of $B$, the signal satisfies the SDE
\begin{equation}
dX_{t}=\overline{l}_{0}\left(X_{t},Y_{t}\right)dt+\sum_{k}Z_{k}\left(X_{t},Y_{t}\right)dY_{t}^{k}+\sum_{j}L_{j}\left(X_{t},Y_{t}\right)d\overline{B}_{t}^{j}\label{eq:X ito}
\end{equation}
where $\overline{l}_{0}=l_{0}+\sum_{k}Z_{k}h_{k}$ and that the robustness
question is linked to the (rough pathwise) robustness of $Y\mapsto X$.
To treat the resulting differential equation driven by Brownian noise
$B$ and a rough path (instead of $Y$) a flow decomposition is used
in \cite{CrisanDiehlFrizOberhauser}. We can now replace this argument
by Theorem \ref{thmRoughSDE} and Theorem \ref{thmRoughSDE'} which
leads to different regularity assumptions on the vector fields.
\begin{thm}
Let $\varphi\in Lip^{1}$ and let $\gamma>\frac{1}{\alpha}$ for some
\textup{$\alpha\in\left(\frac{1}{3},\frac{1}{2}\right)$} and 
\[
l_{0}\in Lip^{1+\epsilon},\,\,\, h,Z,L\in Lip^{\gamma}
\]
for some $\epsilon>0$. Denote with $\left(X,Y\right)$ be the solution
of (\ref{eq:(x,y)}). Then there exists a continuous map 
\[
\theta:\left(\mathcal{C}^{0,\alpha},\rho_{\alpha-H\ddot{o}l}\right)\rightarrow\left(\mathbb{R},\left|.\right|\right)
\]
such that 
\[
\theta\left(\boldsymbol{Y}\right)=\mathbb{E}\left[\varphi\left(X_{t}\right)\lvert\sigma\left(Y_{r},r\in\left[0,t\right]\right)\right]\,\,\,\mathbb{P}-a.s.
\]
where $\boldsymbol{Y}$ denotes the Stratonovich lift of the semimartingale
$Y$ to a geometric rough path.\end{thm}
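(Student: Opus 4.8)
The plan is to follow the change-of-measure route of \cite{CrisanDiehlFrizOberhauser} and \cite{DavisRobustness}, but to feed the resulting mixed rough/stochastic equation into Theorem \ref{thmRoughSDE} and Theorem \ref{thmRoughSDE'} instead of performing a flow decomposition. Throughout, $t\in[0,T]$ is fixed and I abbreviate $\mathcal{Y}_{t}:=\sigma(Y_{r},r\le t)$. First I would pass to Davis' equivalent measure $\tilde{\mathbb{P}}\sim\mathbb{P}$ under which the observation $Y$ is a $d_{Y}$-dimensional Brownian motion independent of the (renamed) signal noise $W$ --- the Brownian motion written $\overline{B}$ in (\ref{eq:X ito}) --- the signal solves the It\^o equation (\ref{eq:X ito}), and, by the Kallianpur--Striebel formula,
\[
\pi_{t}(\varphi)=\mathbb{E}\bigl[\varphi(X_{t})\bigm|\mathcal{Y}_{t}\bigr]=\frac{\tilde{\mathbb{E}}\bigl[\varphi(X_{t})\rho_{t}\bigm|\mathcal{Y}_{t}\bigr]}{\tilde{\mathbb{E}}\bigl[\rho_{t}\bigm|\mathcal{Y}_{t}\bigr]},\qquad\rho_{t}=\exp\Bigl(\int_{0}^{t}h(X_{r},Y_{r})\,dY_{r}-\tfrac12\int_{0}^{t}\lvert h(X_{r},Y_{r})\rvert^{2}\,dr\Bigr),
\]
the stochastic integral being taken against the $\tilde{\mathbb{P}}$-Brownian motion $Y$. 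This reduces the problem to the pathwise dependence of the pair $(X_{t},\rho_{t})$ on $Y$, with $Y\perp W$ under $\tilde{\mathbb{P}}$.

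Next I would cast this as an RSDE in the sense of Section \ref{sec:differential-equations-with}. Since $h,Z,L\in Lip^{\gamma}$ with $\gamma>1/\alpha>2$, rewriting (\ref{eq:X ito}) and the defining equation of $\log\rho_{t}$ in Stratonovich form produces only It\^o--Stratonovich correction terms of class $Lip^{\gamma-1}$ (the cross-brackets between $W$ and $Y$ vanish by independence). I would then augment the state to $\hat{X}=(X,Y,m)$, with $m$ chosen so that $m_{t}=\log\rho_{t}$ and with $Y$ recorded through the trivial equation $dY=dY$; this gives a system
\[
d\hat{X}_{t}=a(\hat{X}_{t})\,dt+b(\hat{X}_{t})\circ dW_{t}+c(\hat{X}_{t})\,d\boldsymbol{\eta}_{t}
\]
driven by a rough path $\boldsymbol{\eta}$ (to be specialised below to the Stratonovich lift of $Y$), with $a\in Lip^{1+\epsilon'}$ for some $\epsilon'>0$ (the modified drift, of the form $(\tilde l_{0},0,-\tfrac12\lvert h\rvert^{2}+\text{corrections})$), $b=(L,0,0)\in Lip^{\gamma}$ and $c=(Z,\mathrm{Id}_{\mathbb{R}^{d_{Y}}},h)\in Lip^{\gamma}$. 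Crucially, $\log\rho$ rather than $\rho$ is carried as a coordinate, since the natural equation for $\rho$ itself has a vector field of linear growth and thus fails to be $Lip^{\gamma}$. With this choice Assumption \ref{assumption} holds (with $\hat{S}_{0}=(X_{0},Y_{0},0)$, which I assume to have Gaussian tails, e.g.~$X_{0}$ deterministic), so Theorem \ref{thmRoughSDE} and Theorem \ref{thmRoughSDE'} apply. Writing $\hat{S}^{\boldsymbol{\eta}}$ for the solution of Theorem \ref{thmRoughSDE}, with first block $S^{\boldsymbol{\eta}}$ (the $X$-part) and last coordinate $m^{\boldsymbol{\eta}}$, I would define
\[
\theta(\boldsymbol{\eta}):=\frac{\mathbb{E}\bigl[\varphi(S^{\boldsymbol{\eta}}_{t})\,e^{m^{\boldsymbol{\eta}}_{t}}\bigr]}{\mathbb{E}\bigl[e^{m^{\boldsymbol{\eta}}_{t}}\bigr]},\qquad\boldsymbol{\eta}\in\mathcal{C}^{0,\alpha}(\mathbb{R}^{d_{Y}}).
\]

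Continuity of $\theta$ then follows from Theorem \ref{thmRoughSDE}: if $\boldsymbol{\eta}^{n}\to\boldsymbol{\eta}$ in $\rho_{\alpha-H\ddot{o}l}$ with norms bounded by some $r$, part (1) gives $\hat{S}^{\boldsymbol{\eta}^{n}}\to\hat{S}^{\boldsymbol{\eta}}$ in $\mathcal{S}^{q}$ for every $q$, hence $S^{\boldsymbol{\eta}^{n}}_{t}\to S^{\boldsymbol{\eta}}_{t}$ and $m^{\boldsymbol{\eta}^{n}}_{t}\to m^{\boldsymbol{\eta}}_{t}$ in probability; part (2), together with $\lvert m^{\boldsymbol{\eta}}_{t}\rvert\le\lvert\hat{S}^{\boldsymbol{\eta}}\rvert_{\infty;[0,T]}$, gives $\sup_{\lVert\boldsymbol{\eta}\rVert_{\alpha-H\ddot{o}l}\le r}\mathbb{E}[e^{p\,m^{\boldsymbol{\eta}}_{t}}]<\infty$ for every $p\ge1$, so $\{e^{m^{\boldsymbol{\eta}^{n}}_{t}}\}_{n}$ is uniformly integrable. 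Since $\varphi\in Lip^{1}$ is in particular bounded, uniform integrability plus convergence in probability yield convergence of the numerator and of the denominator, the latter being strictly positive; hence $\theta$ is continuous on $(\mathcal{C}^{0,\alpha},\rho_{\alpha-H\ddot{o}l})$.

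Finally I would identify $\theta(\boldsymbol{Y})$ with $\pi_{t}(\varphi)$, where $\boldsymbol{Y}$ is the Stratonovich lift of $Y$. On the product space carrying $W$ and $Y$ the augmented Stratonovich SDE has an honest, jointly measurable solution $\hat{S}$, whose $X$- and $m$-components are the signal and $\log\rho_{t}$; by Theorem \ref{thmRoughSDE'} (applied with the rough-path driver there taken to be $Y$ and the retained Brownian motion taken to be $W$), for $\tilde{\mathbb{P}}$-almost every realisation of $Y$ one has $\hat{S}_{t}=\hat{S}^{\boldsymbol{Y}}_{t}$, $\mathbb{P}$-a.s.~in $W$. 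Since $\hat{S}_{t}$ is a measurable function of $(W|_{[0,t]},Y|_{[0,t]})$ and $W\perp Y$, conditioning on $\mathcal{Y}_{t}$ amounts to integrating out $W$ and, combined with the previous identity, gives $\tilde{\mathbb{E}}[\varphi(X_{t})\rho_{t}\mid\mathcal{Y}_{t}]=\bigl(\mathbb{E}[\varphi(S^{\boldsymbol{\eta}}_{t})e^{m^{\boldsymbol{\eta}}_{t}}]\bigr)\big|_{\boldsymbol{\eta}=\boldsymbol{Y}}$ and likewise for the denominator; dividing and invoking Kallianpur--Striebel yields $\pi_{t}(\varphi)=\theta(\boldsymbol{Y})$ $\tilde{\mathbb{P}}$-a.s., hence $\mathbb{P}$-a.s.~by equivalence. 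I expect the main difficulty to lie precisely in the handling of the likelihood: replacing $\rho$ by $\log\rho$ in order to stay within Assumption \ref{assumption} forces one to control $\mathbb{E}[e^{p\,m^{\boldsymbol{\eta}}_{t}}]$ \emph{uniformly} in $\boldsymbol{\eta}$, which is exactly what the locally uniform Gaussian-tail bound of Theorem \ref{thmRoughSDE}(2) (ultimately the integrability theory of Section \ref{sec:integrability-and-tail}) provides; a secondary, more technical, point is that $\boldsymbol{\eta}\mapsto S^{\boldsymbol{\eta}}$ is not jointly measurable, so the identification must be routed through the genuine product-space solution via Theorem \ref{thmRoughSDE'} rather than by a naive substitution.
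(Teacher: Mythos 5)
Your proposal is correct and follows essentially the same route as the paper: change of measure via Davis' observation so that $Y$ becomes an independent Brownian motion under $\tilde{\mathbb{P}}$; the Kallianpur--Striebel ratio representation of $\pi_t(\varphi)$; augmentation of the state with the log-likelihood $I_t=\log\rho_t$ (so as to stay within the $Lip^\gamma$ hypotheses rather than dealing with the linear-growth equation for $\rho$ itself); continuity of $\theta$ from Theorem \ref{thmRoughSDE}(1) plus the uniform exponential integrability from Theorem \ref{thmRoughSDE}(2); and identification through the product-space semimartingale solution via Theorem \ref{thmRoughSDE'}, not by naive substitution. The paper condenses the continuity and consistency steps into citations of \cite[Theorems 6, 7]{CrisanDiehlFrizOberhauser}, whereas you spell out the uniform-integrability argument; the only omissions relative to the paper's write-up are the explicit Stratonovich correction formula $L_0^j$ (you only note such corrections exist and are $Lip^{\gamma-1}$, which is the relevant point) and a statement of the Gaussian-tail assumption on $X_0$, which is implicit in the filtering setup but worth recording since Theorem \ref{thmRoughSDE}(2) requires it.
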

\begin{proof}
To switch the equation (\ref{eq:X ito}) to the Stratonovich formulation
define 
\begin{eqnarray*}
L_{0}^{j}\left(x,y\right) & = & \overline{l}_{0}^{j}-\frac{1}{2}\sum_{k}\sum_{i}\partial_{x^{i}}Z_{k}^{j}\left(x,y\right)Z_{k}^{i}\left(x,y\right)-\frac{1}{2}\sum_{k}\partial_{y^{k}}Z_{k}^{j}\left(x,y\right)\\
 &  & -\frac{1}{2}\sum_{k}\sum_{i}\partial_{x^{i}}L_{k}^{j}\left(x,y\right)L_{k}^{i}\left(x,y\right)-\frac{1}{2}\sum_{k}\partial_{y^{k}}L_{k}^{j}\left(x,y\right).
\end{eqnarray*}
By Theorem \ref{thmRoughSDE}, 
\begin{eqnarray*}
dX_{t}^{\boldsymbol{\eta}} & = & L_{0}\left(X_{t}^{\boldsymbol{\eta}},Y_{t}^{\boldsymbol{\eta}}\right)dt+Z\left(X_{t}^{\boldsymbol{\eta}},Y_{t}^{\boldsymbol{\eta}}\right)d\boldsymbol{\eta}_{t}+\sum_{j}L_{j}\left(X_{t}^{\boldsymbol{\eta}},Y_{t}^{\boldsymbol{\eta}}\right)\circ d\overline{B}_{t}^{j}\\
dY_{t}^{\boldsymbol{\eta}} & = & d\boldsymbol{\eta}_{t}\\
dI_{t}^{\boldsymbol{\eta}} & = & h\left(X_{t}^{\boldsymbol{\eta}},Y_{t}^{\boldsymbol{\eta}}\right)d\boldsymbol{\eta}_{t}-\frac{1}{2}D_{k}h\left(X_{t}^{\boldsymbol{\eta}},Y_{t}^{\boldsymbol{\eta}}\right)dt
\end{eqnarray*}
has unique solution $\left(X_{t}^{\boldsymbol{\eta}},Y_{t}^{\boldsymbol{\eta}},I_{t}^{\boldsymbol{\eta}}\right)\in\mathcal{S}^{2}$
and following the proof of \cite[Theorem 6]{CrisanDiehlFrizOberhauser}
shows continuity of $\theta$ (for these steps it is important to
have $\mathbb{E}\left[\exp\left(qI_{t}^{\boldsymbol{\eta}}\right)\right]<\infty$
for $q\geq2$ as guaranteed by Theorem \ref{thmRoughSDE}). Similarly,
we can follow step-by-step \cite[Theorem 7]{CrisanDiehlFrizOberhauser}
to show the consistency $\theta\left(\boldsymbol{Y}\right)=\pi_{t}\left(\varphi\right)\,\,\,\mathbb{P}-a.s.$\end{proof}
\begin{rem}
The regularity assumption in \cite{CrisanDiehlFrizOberhauser} is
$h,Z\in Lip^{4+\epsilon}$,$L\in Lip^{1}$, i.e.~above approach allows
to relax the regularity of the sensor function $h$ and $Z$ by two
degrees of regularity for the price of an additional degree of regularity
of $L=\left(L_{i}\right)_{i=1}^{d}$.
\end{rem}

\subsection{Feynman--Kac representation for linear RPDEs}

Over the last years there has been an increased interest in giving
a (rough) pathwise meaning to stochastic partial differential equations
and several approaches have emerged, see for example Gubinelli et
al.~\cite{deya2011non,gubinelliLejayTindel}, Hairer et al\@. \cite{CPA:CPA20383,hairer2011solving}
and Teichmann \cite{teichmann2011another}. The approach we focus
on in this section is related to the work of Lions and Souganidis
\cite{lionsSouganidis} and Friz et al.~\cite{caruanaFriz,MR2765508,frizOberhauser,frizDiehl}.
In a setting similar to the one in \cite{frizOberhauser} we are able,
using rough SDEs, to prove existence and uniqueness of solutions under
weaker assumptions on the coefficients and give a stochastic representation
for the solution. 
\begin{defn}
\label{defRoughPDE}Let $\boldsymbol{\eta}\in\mathcal{C}^{0,\alpha}\left(\mathbb{R}^{d}\right)$
be a geometric rough path for some $\alpha\in\left(0,1\right]$ and
$\sigma:\mathbb{R}^{m}\rightarrow\mathbb{R}^{m'}$, $a:\mathbb{R}^{m}\rightarrow\mathbb{R}^{m}$,
$G_{i}:\mathbb{R}^{m}\times\mathbb{R}\times\mathbb{R}^{m}\rightarrow\mathbb{R}$
and $\phi:\mathbb{R}^{m}\rightarrow\mathbb{R}$ be such that for every
$\eta\in C^{1}\left(\left[0,T\right],\mathbb{R}^{d}\right)$ there
exists a unique bounded, uniformly continuous, viscosity solution
$v^{\eta}:\left[0,T\right]\times\mathbb{R}^{m}\rightarrow\mathbb{R}$
to
\[
\left\{ \begin{array}{rcl}
-dv^{\eta}-L\left(x,v^{\eta},Dv^{\eta},D^{2}v^{\eta}\right)dt-\sum_{i=1}^{d}G_{i}\left(x,v^{\eta},Dv^{\eta}\right)\dot{\eta}_{t}^{i} & = & 0,\\
v^{\eta}\left(T,x\right) & = & \phi\left(x\right),
\end{array}\right.
\]
 where

\[
L:\mathbb{R}^{m}\times\mathbb{R}^{m}\times\mathbb{S}^{m}\rightarrow\mathbb{R}\text{ is given as }L\left(x,p,M\right):=\operatorname{Tr}\left[\sigma\left(x\right)\sigma^{T}\left(x\right)M\right]+a\left(x\right)\cdot p.
\]
We then say that a bounded, uniformly continuous function $v:\left[0,T\right]\times\mathbb{R}^{m}\rightarrow\mathbb{R}$
is a solution of the \textit{\emph{rough partial differential equation
(RPDE)}}
\[
\left\{ \begin{array}{rcl}
-dv-L\left(x,v,Dv,D^{2}v\right)dt-c\left(x\right)\cdot Dvd\boldsymbol{\eta}_{t} & = & 0,\\
v\left(T,x\right) & = & \phi\left(x\right),
\end{array}\right.
\]
(with $c=\left(G_{1},\ldots,G_{d}\right)$), if for every sequence
of smooth paths $\left(\eta^{n}\right)_{n}\subset C^{1}\left(\left[0,T\right],\mathbb{R}^{d}\right)$
such that $\eta^{n}\to\boldsymbol{\eta}$ as $n\rightarrow\infty$
in rough path metric we have in locally uniform convergence 
\[
v^{\eta^{n}}\to v\text{ as }n\rightarrow\infty.
\]
\end{defn}
\begin{rem}
Of course above definition is only of use if one can show the existence
of a solution for an interesting family of $\left(L,c,\phi\right)$
in the above sense (uniqueness is built into the definition by the
uniqueness of the approximating solutions). This is still an area
of active research but for example if $c$ is affine linear there
exists a solution in above sense (see \cite{MR1799099,MR2765508,frizOberhauser}).
The theorem below shows not only the existence of such a solution
by a short proof relying on RSDEs as introduced in Section \ref{sec:differential-equations-with}
but gives additionally a Feynman--Kac representation. This finally
leads to lower regularity assumptions on the noise vector fields (however,
in contrast to \cite{MR1799099,MR2765508,frizOberhauser} it only
applies to linear operators $L$). \end{rem}
\begin{thm}
\label{thm:Let--be}Let $\boldsymbol{\eta}\in\mathcal{C}^{0,\alpha}$
be a geometric rough path, $\alpha\in\left(0,1\right]$. Assume $\gamma>\frac{1}{\alpha}$,
$\sigma,c\in Lip^{\gamma}$, $\zeta>1$, $a\in Lip^{\zeta}$. Assume
$\phi$ is bounded and uniformly continuous. Then, there exists a
unique solution to the RPDE%
\footnote{We use $-$ signs to emphasize that we treat a backward equation.%
}

\[
\left\{ \begin{array}{rcl}
-dv-L\left(x,v,Dv,D^{2}v\right)dt-c\left(x\right)\cdot Dvd\boldsymbol{\eta}_{t} & = & 0,\\
v\left(T,x\right) & = & \phi\left(x\right),
\end{array}\right.
\]
Moreover $v\left(t,x\right)=\mathbb{E}\left[\phi\left(S_{T}^{t,x}\right)\right]$
where $S^{s,x}$ denotes the solution of the RSDE
\begin{equation}
\left\{ \begin{array}{rcl}
dS_{t}^{s,x} & = & \overline{a}\left(S_{t}^{s,x}\right)dt+\sigma\left(S_{t}^{s,x}\right)\circ dB_{t}+c\left(S_{t}^{s,x}\right)d\boldsymbol{\eta}_{t},\\
S_{s}^{s,x} & = & x.
\end{array}\right.\label{eq:sde_fk}
\end{equation}
where ($\sigma^{i}$ denotes the $i$th column of $\sigma$) 
\[
\overline{a}=a-\frac{1}{2}\sum_{i=1}D_{\sigma^{i}}\sigma^{i}.
\]
\end{thm}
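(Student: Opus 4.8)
The plan is to prove existence and uniqueness via the approximation that is built into Definition~\ref{defRoughPDE}, combined with the stochastic representation. First I would fix a sequence of smooth paths $\eta^{n}\to\boldsymbol{\eta}$ in $\rho_{\alpha-H\ddot{o}l}$. For each $n$, the PDE with coefficient $\dot{\eta}^{n}$ is a genuine linear parabolic equation and, since $c$ is affine linear in the cited references but here $\sigma,c\in Lip^{\gamma}$, $a\in Lip^{\zeta}$ with $\zeta>1$, the classical Feynman--Kac theorem for Stratonovich SDEs gives a unique bounded uniformly continuous viscosity solution $v^{\eta^{n}}(s,x)=\mathbb{E}\left[\phi\left(S_{T}^{n;s,x}\right)\right]$, where $S^{n;s,x}$ solves the SDE obtained from (\ref{eq:sde_fk}) by replacing $d\boldsymbol{\eta}$ with $d\eta^{n}$. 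The drift correction $\overline{a}=a-\tfrac12\sum_i D_{\sigma^i}\sigma^i$ is exactly the It\^o--Stratonovich correction for the $B$-noise, and note that there is \emph{no} correction needed for the $\eta^{n}$-term because $\eta^{n}$ has bounded variation; this is what makes the smooth-level identification clean.

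Next I would pass to the limit. By Theorem~\ref{thmRoughSDE} applied with vector fields $(\overline{a},\sigma,c)$ (which satisfy Assumption~\ref{assumption}: $\overline{a}\in Lip^{1+\epsilon}$ since $a\in Lip^{\zeta}$ with $\zeta>1$ and $\sigma\in Lip^{\gamma}$ with $\gamma>1/\alpha\ge 2$, and $\sigma,c\in Lip^{\gamma}$), the solutions $S^{n;s,x}$ converge in $\mathcal{S}^{0}$, uniformly on compacts in probability, to the RSDE solution $S^{s,x}$ of (\ref{eq:sde_fk}), and in fact in $\mathcal{S}^{q}$ for every $q$ with local Lipschitz dependence on $\boldsymbol{\eta}$. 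Since $\phi$ is bounded and uniformly continuous, dominated convergence gives
\[
v^{\eta^{n}}(s,x)=\mathbb{E}\left[\phi\left(S_{T}^{n;s,x}\right)\right]\longrightarrow \mathbb{E}\left[\phi\left(S_{T}^{s,x}\right)\right]=:v(s,x).
\]
To upgrade this to \emph{locally uniform} convergence in $(s,x)$ — as required by Definition~\ref{defRoughPDE} — I would use the flow estimates for RDEs (continuity of $x\mapsto S^{s,x}$ and $s\mapsto S^{s,x}$ in $\mathcal{S}^{q}$, uniform on compacts, which follow from the same Davie/Friz--Victoir estimates underlying Theorem~\ref{thmRoughSDE}) together with uniform continuity of $\phi$; this gives equicontinuity of the family $v^{\eta^{n}}$ on compacts, and boundedness is immediate from $\|\phi\|_{\infty}$, so Arzel\`a--Ascoli promotes pointwise to locally uniform convergence. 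Hence $v$ is bounded, uniformly continuous, and is by definition \emph{the} solution of the RPDE; uniqueness is automatic since it is built into the definition (any solution must be the locally uniform limit of the $v^{\eta^{n}}$, which are themselves uniquely determined).

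The main obstacle I anticipate is the interplay of the three convergence modes: Theorem~\ref{thmRoughSDE} delivers convergence only \emph{uniformly on compacts in probability}, not almost surely, so one cannot directly plug in pathwise; but since $\phi$ is bounded, convergence in probability of $\phi(S^{n;s,x}_T)$ plus uniform integrability (trivial from boundedness) yields convergence of expectations, so this is a non-issue once phrased correctly. The more delicate point is ensuring the limit $v$ genuinely has the \emph{viscosity-solution} status implicitly demanded — but Definition~\ref{defRoughPDE} defines an RPDE solution purely as a locally uniform limit of the classical solutions $v^{\eta^{n}}$, so once I have that limit exists and is independent of the approximating sequence (guaranteed by the $\boldsymbol{\eta}$-only dependence in Theorem~\ref{thmRoughSDE}), nothing further is required. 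A final small point to check is that $\overline{a}\in Lip^{1+\epsilon}$: this needs $a\in Lip^{\zeta}$, $\zeta>1$, to control $a$, and $D_{\sigma^i}\sigma^i$, which is built from one derivative of $\sigma\in Lip^{\gamma}$, so lies in $Lip^{\gamma-1}$ with $\gamma-1>1$; thus $\overline{a}\in Lip^{\min(\zeta,\gamma-1)}$ and $\min(\zeta,\gamma-1)>1$ as needed.
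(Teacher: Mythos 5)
Your proposal follows essentially the same route as the paper: Feynman--Kac for each smooth approximation $\eta^{n}$, pointwise convergence of $v^{\eta^{n}}$ via the RSDE stability of Theorem \ref{thmRoughSDE} and boundedness of $\phi$, and an upgrade to locally uniform convergence by equicontinuity plus Arzel\`a--Ascoli (the paper makes the $s$-equicontinuity explicit via the small-time estimate $\mathbb{E}\left[\left\Vert \boldsymbol{\Lambda}^{\boldsymbol{\eta}}\right\Vert _{p-var;\left[0,t\right]}^{q}\right]=o\left(1\right)$, which is the content of your ``continuity of $s\mapsto S^{s,x}$''). The argument is correct; only your side remark that $\gamma>1/\alpha\ge2$, hence $\gamma-1>1$, needs the restriction $\alpha<1/2$ under which Theorem \ref{thmRoughSDE} actually operates.
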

\begin{proof}
Let $\left(\eta^{n}\right)_{n}$ be a sequence of smooths paths converging
to $\boldsymbol{\eta}$ in rough path topology. For every fixed $n$
we have the Feynman--Kac representation (see e.g.~\cite[Theorem 4.13]{bibPardouxPeng92})
\begin{align*}
v^{n}\left(t,x\right) & =\mathbb{E}\left[\phi\left(S_{T}^{n,t,x}\right)\right],
\end{align*}
where $v^{n}$ is the unique, bounded viscosity solution to 
\begin{align*}
-dv^{n}-L\left(x,v^{n},Dv^{n},D^{2}v^{n}\right)dt-c\left(x\right)\cdot Dv^{n}d\eta_{t}^{n} & =0,\\
v^{n}\left(T,x\right) & =\phi\left(x\right),
\end{align*}
and $S^{n,s,x}$ solves the SDE 
\[
\left\{ \begin{array}{rcl}
dS_{t}^{n,s,x} & = & \overline{a}\left(S_{t}^{n,s,x}\right)dt+\sigma\left(S_{t}^{n,s,x}\right)\circ dB_{t}+c\left(S_{t}^{n,s,x}\right)d\eta_{t}^{n},\\
S_{s}^{n,s,x} & = & x.
\end{array}\right.
\]
Theorem \ref{thmRoughSDE'} now gives the pointwise convergence 
\[
v^{n}\left(t,x\right)=\mathbb{E}\left[\phi\left(S_{T}^{n,t,x}\right)\right]\to_{n\to\infty}\mathbb{E}\left[\phi\left(S_{T}^{t,x}\right)\right]=:v\left(t,x\right).
\]
To get locally uniform convergence, it suffices to show local equicontinuity
of $\left(v^{n}\right)_{n}$ (by the Arzelà--Ascoli theorem). By the
same arguments as in Theorem 6 one sees that a rough SDE is also locally
uniformly continuous in the initial condition $S_{0}$, uniformly
over $\boldsymbol{\eta}$ in bounded sets. Moreover, it is straightforward
to show, that for every $q\ge1$ 
\[
\mathbb{E}\left[\left\Vert \boldsymbol{\Lambda}^{\boldsymbol{\eta}}\right\Vert _{p-var;\left[0,t\right]}^{q}\right]=o\left(1\right)\text{ as }t\rightarrow0,
\]
locally uniformly for $\boldsymbol{\eta}$. Putting the above together
yields the local equicontinuity of the $\left(v^{n}\right)_{n}$. \end{proof}
\begin{rem}
Theorem \ref{thm:Let--be} can be easily extended to cover equations
of the type 
\begin{align*}
-dv-L\left(t,x,v,Dv,D^{2}v\right)dt-c\left(t,x,u,Du\right)d\boldsymbol{\eta}_{t} & =0,\\
v\left(T,x\right) & =\phi\left(x\right),
\end{align*}
where $c$ is affine linear in $\left(u,Du\right)$ (as in \cite{frizOberhauser}).
For brevity we only treat the gradient case. 
\begin{rem}
In Theorem \ref{thm:Let--be} we only assume $c\in Lip^{\gamma}$
in contrast to $Lip^{\gamma+2}$ as in \cite{MR2765508,frizOberhauser}
where a flow decomposition is used.
\end{rem}
\end{rem}

\section{\label{sec:integrability-and-tail}integrability estimates for gaussian
rough differential equations revisited}

A classic result of X.~Fernique \cite{STMAZ.03327878} shows that
Gaussian probability measures on separable Banach spaces have Gaussian
tails in the Banach norm. If one considers as Banach space an abstract
Wiener space, this immediately implies Gauss tails of norms of Gaussian
processes which is of uttermost importance for many applications in
stochastic analysis. In rough path norms, iterated stochastic integrals
additionally appear and Fernique's theorem is no longer directly applicable.
Another issue is that the genuine rough-\emph{pathwise} estimates%
\footnote{\label{fn:The-solution-}The solution $dy=V\left(y\right)d\boldsymbol{x}$
is estimated $\left|y_{t}\right|\leq c.\exp\left(c\left\Vert \boldsymbol{x}\right\Vert _{p-var}^{p}\right)$
and this is known to be\textbf{\emph{ }}rough-\emph{pathwise }optimal,
see \cite{friz-oberhauser-2008b}. Applied with $\boldsymbol{x}=\boldsymbol{B}$
and $p>2$ and the Gaussian tail property of $\left\Vert \boldsymbol{B}\right\Vert _{p-var}$
this does not even imply the integrability of the RDE solution.%
} for solutions of RDEs driven by Gaussian processes do not ``see''
probabilistic cancellations, hence do not lead to useful probabilistic
estimates (e.g.~$L^{q}\left(\Omega\right)$ estimates) for solutions
of such RDEs. 

In \cite[Theorem 2]{MR2661566} the Borell--Sudakov--Tsirelson inequality
--- an analogue of the Gaussian isoperimetric inequality which holds
in infinite dimensional spaces --- was used to prove a generalization
of Fernique's theorem. This implies for example that $\left\Vert \boldsymbol{B}\right\Vert _{p-var}$
has Gauss tails for $p>2$ (see also our proof of Theorem \ref{thm:joint lift})
but combined only with pathwise estimates for RDE solutions this is
not even sufficient to derive moment estimates for RDE solutions driven
by Brownian motion (see footnote \ref{fn:The-solution-}; in It\={o}'s
stochastic calculus this is of course easy to establish). A key insight
was recently made in \cite{bibCassLyonsLitterer} by introducing ``\emph{greedy
partitions}'' which allow to capture the needed probabilistic cancellations.
The main result in \cite{bibCassLyonsLitterer} can then be seen as
the verification that a certain random measure $N$ (which is related
to the norm of a Gaussian rough path along such greedy partitions,
Definition \ref{defn:cll_random_measure}), has exponential tails
on compact sets (or even Gaussian tails in the case of Brownian motion).
The proof also uses the Borell--Sudakov--Tsirelson inequality. In
this section, using the isoperimetric inequality in a slightly different
spirit, we give another proof of the main result in \cite{bibCassLyonsLitterer}.
Our proof, based on a generalization of \cite[Theorem 2]{MR2661566}
and the greedy partitions of \cite{bibCassLyonsLitterer}, is surprisingly
short and, as we hope, may be somewhat more instructive.

\subsection{Revisiting the generalized Fernique theorem}

We first present a generalization of \cite[Theorem 2]{MR2661566}
which can be stated in a fairly general framework. Let $E$ be a real,
locally convex Hausdorff space. A measure $\gamma$ on the Borel sets
of $E$ is called a (centered) \textit{Gauß\ measure} if the push
forward measure under each element of the topological dual of $E$
is a (centered) normal random variable in $\mathbb{R}$. The corresponding
Cameron--Martin space will be denoted by $\mathcal{H}$. The triplet
$\left(E,\mathcal{H},\gamma\right)$ will be called a \textit{Gaussian
space}. $\gamma$ is called a \textit{Radon probability measure} on
the Borel sets of $E$ if $\gamma\left(B\right)=\gamma_{*}\left(B\right)$
for every Borel set $B$ where, for any subset $A\subset E$, 
\[
\gamma_{*}\left(A\right):=\sup\left\{ \gamma\left(K\right)\,:\, K\text{ compact and }K\subseteq A\right\} .
\]

\begin{thm}
\label{thm:even_more_gen_fernique} Let $\left(E,\mathcal{H},\gamma\right)$
be a Gaussian space with $\gamma$ being centered and a Radon measure%
\footnote{Note that probability measures on the Borel sets of Polish spaces
are Radon measures, thus Gaussian measures on separable Fréchet spaces
(and therefore on Banach spaces) are always Radon measures.%
}. Let $f,g\colon E\to\mathbb{R}\cup\left\{ +\infty,-\infty\right\} $
be measurable functions. Assume that there is a null-set $N$ such
that for every $x$ outside $N$ we have 
\[
\left|f\left(x\right)\right|\leq\left|g\left(x-h\right)\right|+\sigma\left|h\right|{}_{\mathcal{H}}
\]
for every $h\in\mathcal{H}$. Assume further that there is an $r_{0}\geq0$
such that 
\[
\gamma\left\{ x\in E\,:\,\left|g\left(x\right)\right|\leq\frac{r_{0}}{2}\right\} =:a>0.
\]
Then 
\[
\gamma\left\{ x\in E\,:\,\left|f\left(x\right)\right|>r\right\} \leq1-\Phi\left(\alpha+\frac{r}{2\sigma}\right)
\]
for every $r\geq r_{0}$ where $\Phi$ denotes the cumulative distribution
function of a standard normal random variable and $\alpha\in\mathbb{R}$
is chosen such that $\Phi\left(\alpha\right)\leq a$. \end{thm}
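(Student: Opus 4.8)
The plan is to derive the tail estimate from the Borell--Sudakov--Tsirelson inequality, i.e.~the isoperimetric inequality for centered Radon Gaussian measures on locally convex spaces (see e.g.~\cite{bibBogachev}), in exactly the spirit of \cite[Theorem 2]{MR2661566}. Write $\mathcal{K}=\left\{ h\in\mathcal{H}:\left|h\right|_{\mathcal{H}}\le1\right\}$ for the closed unit ball of the Cameron--Martin space and introduce the ``good set''
\[
A:=\left\{ x\in E:\left|g\left(x\right)\right|\le\frac{r_{0}}{2}\right\},\qquad\gamma\left(A\right)=a>0.
\]
The underlying idea is that the assumed inequality between $f$ and $g$ forces every sufficiently large Cameron--Martin enlargement of $A$ to be contained in a sublevel set of $\left|f\right|$, while isoperimetry guarantees that such enlargements carry almost all the mass of $\gamma$.

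First I would carry out the (essentially only) geometric step. Fix $r\ge r_{0}$ and set $\rho:=\sigma^{-1}\left(r-\tfrac{r_{0}}{2}\right)$, noting that $\rho\ge\tfrac{r}{2\sigma}\ge0$ precisely because $r\ge r_{0}\ge0$. I claim that
\[
\left(A+\rho\mathcal{K}\right)\setminus N\subseteq\left\{ x\in E:\left|f\left(x\right)\right|\le r\right\}.
\]
Indeed, if $x\notin N$ can be written as $x=y+h$ with $y\in A$ and $\left|h\right|_{\mathcal{H}}\le\rho$, then applying the hypothesis at the point $x$ with the shift $h$ and using $x-h=y\in A$ gives $\left|f\left(x\right)\right|\le\left|g\left(y\right)\right|+\sigma\left|h\right|_{\mathcal{H}}\le\tfrac{r_{0}}{2}+\sigma\rho=r$. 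Passing to inner measures, and absorbing the null set $N$ (legitimate since $\left\{ \left|f\right|\le r\right\}$ is measurable and $\gamma_{*}$ is monotone with $\gamma_{*}\left(M\cup N\right)=\gamma\left(M\right)$ for measurable $M$), this inclusion yields
\[
\gamma_{*}\left(A+\rho\mathcal{K}\right)\le1-\gamma\left\{ x\in E:\left|f\left(x\right)\right|>r\right\}.
\]

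Next I would invoke the isoperimetric inequality: since $\gamma$ is a centered Radon Gaussian measure and $\gamma\left(A\right)=a>0$, the Borell--Sudakov--Tsirelson inequality gives $\gamma_{*}\left(A+\rho\mathcal{K}\right)\ge\Phi\left(\Phi^{-1}\left(a\right)+\rho\right)$. Combining the two displays with $\Phi^{-1}\left(a\right)\ge\alpha$ (which is just the hypothesis $\Phi\left(\alpha\right)\le a$), with the monotonicity of $\Phi$, and with $\rho\ge\tfrac{r}{2\sigma}$ yields $1-\gamma\left\{ \left|f\right|>r\right\} \ge\Phi\left(\alpha+\tfrac{r}{2\sigma}\right)$, which is the assertion.

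The hard part is not in the chain of inequalities — which is short — but in being careful about the measure theory: the Minkowski sum $A+\rho\mathcal{K}$ need not be Borel, so one must genuinely work with the inner measure $\gamma_{*}$ throughout, which is the natural setting in which the isoperimetric inequality for Radon Gaussian measures is available; and one should record the degenerate case $\sigma=0$ as being implicitly excluded, since $\tfrac{r}{2\sigma}$ appears in the conclusion (and, for $a=1$, $\Phi^{-1}\left(a\right)=+\infty$, in which case the bound holds trivially). Apart from these bookkeeping points, the argument above is complete.
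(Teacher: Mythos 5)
Your proof is correct and is precisely the argument that the paper points to: the paper's one-line proof simply says to inspect the proof of \cite[Theorem 2]{MR2661566} and replace $f(x-h)$ by $g(x-h)$, which is exactly the isoperimetric-enlargement argument you spell out (including the correct handling of the null set via inner measure and the elementary inequality $\rho=\sigma^{-1}(r-r_0/2)\ge r/(2\sigma)$ when $r\ge r_0$).
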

\begin{proof}
Inspection of the proof in \cite{MR2661566} shows that the very same
argument holds when $f\left(x-h\right)$ is replaced by $g\left(x-h\right)$. 
\end{proof}

\subsection{Greedy partitions}

Recall the following definition from \cite{bibCassLyonsLitterer}.
\begin{defn}
\label{defn:cll_random_measure}Let $\omega\colon\left\{ u,v\in\left[0,T\right]{}^{2}\,:\, u\leq v\right\} \to\mathbb{R}_{+}$
be a control function \cite{friz-victoir-book}. Let $\left[s,t\right]\subseteq\left[0,T\right]$
and choose $\beta>0$. Define $\left\{ \tau_{0}\leq\tau_{1}\leq\ldots\right\} $
as 
\begin{align*}
\tau_{0} & =s\\
\tau_{i+1} & =\inf\left\{ u\,:\,\omega\left(\tau_{i},u\right)\geq\beta,\ \tau_{i}<u\leq t\right\} \wedge t.
\end{align*}
Then we set $N_{\beta}\left(\omega;\left[s,t\right]\right):=\sup\left\{ n\in\mathbb{N}_{0}\,:\,\tau_{n}<t\right\} $.
If $\mathbf{x}\colon\left[0,T\right]\to G^{[p]}\left(\mathbb{R}^{d}\right)$
is a weakly geometric $p$-rough path and $\|\cdot\|_{p-\text{var}}$
denotes the homogeneous $p$-variation norm induced by the Carnot--Caratheodory
norm (cf.~\cite[Chapter 8]{friz-victoir-book}), we set $N_{\beta}\left(\mathbf{x};\left[s,t\right]\right):=N_{\beta}\left(\|\mathbf{x}\|_{p-\text{var}}^{p};\left[s,t\right]\right)$.\end{defn}
\begin{lem}
\label{lemma:growth_random_meas_cm_shift}Let $\mathbf{x}$ be a weakly
geometric $p$-rough path and $h$ be a path of bounded $q$-variation
where $1\leq q\leq p$ and $\frac{1}{p}+\frac{1}{q}>1$. Then there
is an $\beta=\beta\left(p,q\right)$ such that%
\footnote{$T_{h}$ denotes the usual translation operator, see \cite[Chapter 9]{friz-victoir-book}.%
} 
\[
N_{\beta}\left(T_{h}\left(\mathbf{x}\right);\left[0,T\right]\right)\leq\|\mathbf{x}\|_{p-\text{var}}^{p}+\left|h\right|{}_{q-\text{var}}^{q}.
\]
\end{lem}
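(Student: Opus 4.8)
\textbf{Proof plan for Lemma \ref{lemma:growth_random_meas_cm_shift}.}

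The plan is to control the $p$-variation of the translated rough path $T_h(\mathbf{x})$ on any subinterval in terms of the $p$-variation of $\mathbf{x}$ and the $q$-variation of $h$, and then read off a bound on the greedy-partition count. First I would recall the Young-type estimate for the translation operator: for a weak geometric $p$-rough path $\mathbf{x}$ and a $q$-variation path $h$ with $1/p+1/q>1$, one has a bound of the form
\[
\|T_h(\mathbf{x})\|_{p\text{-var};[u,v]} \leq C\big(\|\mathbf{x}\|_{p\text{-var};[u,v]} + |h|_{q\text{-var};[u,v]}\big)
\]
for a constant $C=C(p,q)$ (this is a standard consequence of the construction of $T_h$ together with Young's inequality for cross-integrals, cf.~\cite[Chapter 9]{friz-victoir-book}). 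Since $q\leq p$ we also have $|h|_{q\text{-var};[u,v]}\geq |h|_{p\text{-var};[u,v]}$, so the same control function bound holds after raising to the $p$-th power, giving a superadditive control
\[
\omega_{T_h(\mathbf{x})}(u,v) := \|T_h(\mathbf{x})\|_{p\text{-var};[u,v]}^{p} \leq C'\big(\|\mathbf{x}\|_{p\text{-var};[u,v]}^{p} + |h|_{q\text{-var};[u,v]}^{q}\big) =: C'\,\tilde\omega(u,v),
\]
using that both $\|\mathbf{x}\|_{p\text{-var}}^p$ and $|h|_{q\text{-var}}^q$ are controls and a sum of controls is a control.

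Next I would choose $\beta=\beta(p,q)$ equal to (a suitable multiple of) $C'$, so that each increment $\tau_i\to\tau_{i+1}$ in the greedy partition associated to $\omega_{T_h(\mathbf{x})}$ with threshold $\beta$ forces $\tilde\omega(\tau_i,\tau_{i+1})\geq 1$, i.e.~$\omega_{T_h(\mathbf{x})}(\tau_i,\tau_{i+1})\geq\beta$ implies $\tilde\omega(\tau_i,\tau_{i+1})\geq\beta/C' = 1$. Then, using superadditivity of $\tilde\omega$ along the partition points $\tau_0=0<\tau_1<\dots<\tau_{N}<T$, where $N=N_\beta(T_h(\mathbf{x});[0,T])$, I get
\[
N \leq \sum_{i=0}^{N-1}\tilde\omega(\tau_i,\tau_{i+1}) \leq \tilde\omega(0,T) = \|\mathbf{x}\|_{p\text{-var}}^{p} + |h|_{q\text{-var}}^{q},
\]
which is exactly the claimed inequality.

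The main obstacle is getting the translation estimate in precisely the homogeneous $p$-variation / Carnot--Caratheodory norm used in Definition \ref{defn:cll_random_measure}, with a clean control-function structure on the right-hand side; one has to be a little careful that the constant $C(p,q)$ coming from Young's inequality is genuinely multiplicative (not additive with lower-order terms that are not themselves controls), so that absorbing it into the single threshold $\beta$ is legitimate. Once that estimate is in hand in the right form, the counting argument via superadditivity is routine. I would also note that the same scheme gives the more refined localized statement $N_\beta(T_h(\mathbf{x});[s,t])\leq \|\mathbf{x}\|_{p\text{-var};[s,t]}^p + |h|_{q\text{-var};[s,t]}^q$, which is what is actually used downstream, but for the statement as written the global version on $[0,T]$ suffices.
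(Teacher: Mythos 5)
There is a genuine gap in the key display of your argument. The translation estimate from \cite[Theorem 9.33]{friz-victoir-book} gives
\[
\|T_{h}(\mathbf{x})\|_{p\text{-var};[u,v]}\leq C\bigl(\|\mathbf{x}\|_{p\text{-var};[u,v]}+|h|_{q\text{-var};[u,v]}\bigr),
\]
and raising this to the $p$-th power produces $|h|_{q\text{-var};[u,v]}^{\,p}$ on the right-hand side, \emph{not} $|h|_{q\text{-var};[u,v]}^{\,q}$. Your justification for swapping the exponent --- that $|h|_{q\text{-var}}\geq|h|_{p\text{-var}}$ since $q\leq p$ --- concerns which variation norm sits inside, not the outer exponent, and is beside the point. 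The inequality you assert,
\[
\|T_{h}(\mathbf{x})\|_{p\text{-var};[u,v]}^{p}\leq C'\bigl(\|\mathbf{x}\|_{p\text{-var};[u,v]}^{p}+|h|_{q\text{-var};[u,v]}^{q}\bigr),
\]
is simply false when $p>q$: take $\mathbf{x}$ trivial and $h=\lambda g$ for a fixed smooth $g$; the left-hand side grows like $\lambda^{p}$ while the right-hand side grows like $\lambda^{q}$. Since $a^{p}\leq a^{q}$ only for $a\leq1$, the exponent reduction from $p$ to $q$ is a \emph{local} phenomenon and cannot be performed at the level of a global pointwise control bound.

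The repair is exactly the step your plan skips, and it is where the paper's proof does its real work: keep $\tilde\omega(u,v):=\|\mathbf{x}\|_{p\text{-var};[u,v]}^{p}+|h|_{q\text{-var};[u,v]}^{p}$ (still a control, since $p/q\geq1$), use monotonicity and scaling of $N_{\beta}$ to reduce to $N_{1}(\tilde\omega;[0,T])$ with $\beta=C_{p,q}$, and observe that on each interval $[\tau_i,\tau_{i+1}]$ of the associated greedy partition one has $\tilde\omega(\tau_i,\tau_{i+1})\leq1$, hence $|h|_{q\text{-var};[\tau_i,\tau_{i+1}]}\leq1$ and therefore $|h|_{q\text{-var};[\tau_i,\tau_{i+1}]}^{p}\leq|h|_{q\text{-var};[\tau_i,\tau_{i+1}]}^{q}$ \emph{on those intervals only}. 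Summing and using superadditivity of $\|\mathbf{x}\|_{p\text{-var}}^{p}$ and of $|h|_{q\text{-var}}^{q}$ separately then yields the stated bound with the $q$-th power. Your counting step via superadditivity is fine once this is in place; note also that getting the exponent $q$ rather than $p$ is not cosmetic --- it is what makes $N_{\beta}^{1/q}$ (rather than $N_{\beta}^{1/p}$) Gaussian downstream in Theorem \ref{thm:integrabilityOfRVs}.
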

\begin{proof}
We have 
\begin{align*}
N_{\beta}\left(\|T_{h}\left(\mathbf{x}\right)\|_{p-\text{var}}^{p};\left[0,T\right]\right) & \leq N_{\beta}\left(C_{p,q}\left(\|\mathbf{x}\|_{p-\text{var}}^{p}+\|h\|_{q-\text{var}}^{p}\right);\left[0,T\right]\right)\\
 & =N_{1}\left(\|\mathbf{x}\|_{p-\text{var}}^{p}+\|h\|_{q-\text{var}}^{p};\left[0,T\right]\right)
\end{align*}
with the choice $\beta=C_{p,q}$, using \cite[Theorem 9.33]{friz-victoir-book}.
By definition, 
\[
N_{1}\left(\|\mathbf{x}\|_{p-\text{var}}^{p}+\|h\|_{q-\text{var}}^{p};\left[0,T\right]\right)\leq\sum_{\tau_{i}}\|\mathbf{x}\|_{p-\text{var};\left[\tau_{i},\tau_{i+1}\right]}^{p}+\|h\|_{q-\text{var};\left[\tau_{i},\tau_{i+1}\right]}^{p}
\]
where $\left(\tau_{i}\right)$ is a finite partition of $\left[0,T\right]$
for which $\|\mathbf{x}\|_{p-\text{var};\left[\tau_{i},\tau_{i+1}\right]}^{p}+\|h\|_{q-\text{var};\left[\tau_{i},\tau_{i+1}\right]}^{p}\leq1$
for every $\tau_{i}$, and in particular $\|h\|_{q-\text{var};\left[\tau_{i},\tau_{i+1}\right]}^{p}\leq\|h\|_{q-\text{var};\left[\tau_{i},\tau_{i+1}\right]}^{q}$.
Hence 
\begin{eqnarray*}
N_{1}\left(\|\mathbf{x}\|_{p-\text{var}}^{p}+\|h\|_{q-\text{var}}^{p};\left[0,T\right]\right) & \leq & \sum_{\tau_{i}}\|\mathbf{x}\|_{p-\text{var};\left[\tau_{i},\tau_{i+1}\right]}^{p}+\|h\|_{q-\text{var};\left[\tau_{i},\tau_{i+1}\right]}^{q}\\
 & \leq & \|\mathbf{x}\|_{p-\text{var};\left[0,T\right]}^{p}+\|h\|_{q-\text{var};\left[0,T\right]}^{q}.
\end{eqnarray*}

\end{proof}

\subsection{Integrability estimates for rough path valued random variables}

Combining the above leads to a simple and easy proof of integrability
estimates for Gaussian rough path norms.
\begin{thm}[Integrability of rough path valued random variables]
 \label{thm:integrabilityOfRVs} Let $\left(\Omega,\mathcal{H},\gamma\right)$
be a centered Gaussian space with $\Omega=C_{0}\left(\left[0,T\right],\mathbb{R}^{d}\right)$.
Assume that there is a measurable map $F\colon\Omega\to\mathcal{C}^{p}$
to the space of geometric $p$-rough paths. Furthermore, assume that
there is an embedding 
\begin{align}
\iota\colon\mathcal{H}\hookrightarrow C^{q-\text{var}}\label{eqn:CM_embedding}
\end{align}
with $1\leq q\leq p$ and $\frac{1}{p}+\frac{1}{q}>1$ and that the
set 
\begin{align}
\left\{ \omega\,:\, T_{h}\left(F\left(\omega\right)\right)=F\left(\omega+h\right)\text{ for all }h\in\mathcal{H}\right\} =:\tilde{\Omega}\label{eq:shift}
\end{align}
has full measure. Then for all $\beta>0$, $N_{\beta}\left(F;\left[0,T\right]\right)^{\frac{1}{q}}$
has Gaussian tails. More specific, if 
\[
\mathbb{P}\left[\left\Vert F\right\Vert {}_{p-var}\leq K\right]\geq a>0
\]
and if $M$ is a bound on $\|\iota\|_{\mathcal{H}\hookrightarrow C^{q-\text{var}}}$,
there is a $\delta=\delta\left(p,q,K,a,M,\beta\right)>0$ such that
\[
\mathbb{E}\left[\exp\left(\delta N_{\beta}\left(F;\left[0,T\right]\right){}^{\frac{2}{q}}\right)\right]<\frac{1}{\delta}.
\]
 \end{thm}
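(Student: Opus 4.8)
The plan is to combine the generalized Fernique theorem (Theorem \ref{thm:even_more_gen_fernique}) with the Cameron--Martin shift estimate for the greedy-partition counting function (Lemma \ref{lemma:growth_random_meas_cm_shift}). Set $f(\omega) := N_\beta(F(\omega);[0,T])^{1/q}$ and look for a suitable comparison function $g$. The essential point is that the hypothesis \eqref{eq:shift} together with Lemma \ref{lemma:growth_random_meas_cm_shift} gives, for $\omega \in \tilde\Omega$ and any $h \in \mathcal H$,
\[
N_\beta(F(\omega);[0,T]) = N_\beta(T_{-h}(F(\omega+h));[0,T]) \leq \|F(\omega+h)\|_{p\text{-var}}^p + |{-h}|_{q\text{-var}}^q \leq \|F(\omega+h)\|_{p\text{-var}}^p + M^q|h|_{\mathcal H}^q.
\]
The mismatch here is that the shift appears as $|h|_{\mathcal H}^q$, not $|h|_{\mathcal H}$, and there is a $p$-th power on the rough path norm; so one cannot directly feed this into Theorem \ref{thm:even_more_gen_fernique}, which wants a bound of the form $|f(x)| \leq |g(x-h)| + \sigma|h|_{\mathcal H}$. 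The fix is to take $q$-th roots throughout and use subadditivity of $t \mapsto t^{1/q}$ (since $q \geq 1$): from $a+b \leq$ above we get $N_\beta(F(\omega);[0,T])^{1/q} \leq \|F(\omega+h)\|_{p\text{-var}}^{p/q} + M|h|_{\mathcal H}$. Hence with the choice
\[
g(\omega) := \|F(\omega)\|_{p\text{-var}}^{p/q}, \qquad \sigma := M,
\]
the comparison hypothesis of Theorem \ref{thm:even_more_gen_fernique} holds on $\tilde\Omega$, which has full measure.

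Next I would check the non-degeneracy condition on $g$. Since $\mathbb P[\|F\|_{p\text{-var}} \leq K] \geq a > 0$, we have $\mathbb P[\,|g(\omega)| \leq K^{p/q}\,] \geq a$, so the hypothesis holds with $r_0 := 2K^{p/q}$ and the given $a$; pick $\alpha \in \mathbb R$ with $\Phi(\alpha) \leq a$. Theorem \ref{thm:even_more_gen_fernique} then yields, for all $r \geq r_0$,
\[
\gamma\{\omega : N_\beta(F(\omega);[0,T])^{1/q} > r\} \leq 1 - \Phi\!\left(\alpha + \frac{r}{2M}\right).
\]
Using the standard Gaussian tail bound $1 - \Phi(u) \leq \exp(-u^2/2)$ for $u$ large (and controlling the bounded range $r < r_0$ trivially), this gives an estimate of the form $\gamma\{N_\beta(F)^{1/q} > r\} \leq C\exp(-c r^2)$, i.e. $N_\beta(F;[0,T])^{1/q}$ has Gaussian tails, which is the qualitative claim.

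For the quantitative bound $\mathbb E[\exp(\delta N_\beta(F;[0,T])^{2/q})] < 1/\delta$, I would integrate the tail estimate: writing $X := N_\beta(F;[0,T])^{1/q}$,
\[
\mathbb E[\exp(\delta X^2)] = 1 + \int_0^\infty 2\delta r\, e^{\delta r^2}\, \gamma\{X > r\}\, dr \leq 1 + \int_0^{r_0} 2\delta r\, e^{\delta r^2}\, dr + \int_{r_0}^\infty 2\delta r\, e^{\delta r^2}\, e^{-\frac12(\alpha + r/(2M))^2}\, dr,
\]
and for $\delta < 1/(8M^2)$ the quadratic $\delta r^2 - \tfrac12(\alpha + r/(2M))^2$ has a strictly negative leading coefficient, so the integral converges and can be made as small as desired by shrinking $\delta$; choosing $\delta$ small enough (depending only on $p, q, K, a, M, \beta$ through $r_0$ and $\alpha$) forces the whole expression below $1/\delta$. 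The main obstacle, and the only genuinely delicate point, is the bookkeeping in the first paragraph: getting the powers to line up so that Lemma \ref{lemma:growth_random_meas_cm_shift} produces exactly the affine-in-$|h|_{\mathcal H}$ bound that Theorem \ref{thm:even_more_gen_fernique} requires — this is where the choice $g = \|F\|_{p\text{-var}}^{p/q}$ (rather than the naive $\|F\|_{p\text{-var}}$ or $N_\beta$ itself) is forced on us, and where the subadditivity of $t\mapsto t^{1/q}$ is used. Everything after that is a routine application of the two quoted results plus elementary Gaussian integration.
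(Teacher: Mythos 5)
Your argument follows the paper's proof almost exactly -- same choice of comparison function $g=\|F(\cdot)\|_{p\text{-var}}^{p/q}$ (implicitly, in the paper), same use of Lemma \ref{lemma:growth_random_meas_cm_shift} to produce the Cameron--Martin shift bound, same appeal to Theorem \ref{thm:even_more_gen_fernique} -- and you correctly supply the bookkeeping (taking $q$-th roots and using subadditivity of $t\mapsto t^{1/q}$, then integrating the Gaussian tail) that the paper leaves implicit. That part is fine.

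There is, however, one genuine gap: in your first display you apply Lemma \ref{lemma:growth_random_meas_cm_shift} to $N_\beta$ for the \emph{arbitrary} $\beta>0$ appearing in the theorem statement. The lemma does not give this; it only asserts the existence of \emph{one} threshold $\beta_0=\beta(p,q)$ (namely $\beta_0=C_{p,q}$ from the Lipschitz estimate for the translation operator, \cite[Theorem 9.33]{friz-victoir-book}) for which
\[
N_{\beta_0}\bigl(T_h(\mathbf x);[0,T]\bigr)\leq\|\mathbf x\|_{p\text{-var}}^{p}+|h|_{q\text{-var}}^{q}
\]
holds. For a general $\beta$ (in particular $\beta<\beta_0$) the inequality can fail, so your proof as written only establishes the conclusion for $\beta=\beta_0$. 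The paper closes this gap explicitly: it first proves the claim for $\beta_0$ and then invokes the fact that $N_\beta$ and $N_{\beta'}$ are comparable for all $\beta,\beta'>0$ (Lemma 3 in \cite{bibFrizRiedel}) to transfer the Gaussian integrability to every $\beta>0$. You need this extra step (or an equivalent comparison of the counting functions at different thresholds); without it the statement ``for all $\beta>0$'' is not proved. Everything else -- the verification of the non-degeneracy condition with $r_0=2K^{p/q}$, the choice $\sigma=M$, and the final integration for $\delta$ small -- is correct and matches the intended argument.
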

\begin{proof}
Lemma \ref{lemma:growth_random_meas_cm_shift} implies that there
is a $\beta_{0}$ such that 
\[
N_{\beta_{0}}\left(F\left(\omega\right);\left[0,T\right]\right)\leq\|F\left(\omega-h\right)\|_{p-\text{var}}^{p}+\|\iota\|_{\mathcal{H}\hookrightarrow C^{q}}\left|h\right|{}_{\mathcal{H}}^{q}
\]
holds on the set $\tilde{\Omega}$ for every $h\in\mathcal{H}$. Thus
we may apply Theorem \ref{thm:even_more_gen_fernique} to conclude
the assertion for $\beta_{0}$. By Lemma 3 in \cite{bibFrizRiedel},
$N_{\beta}$ and $N_{\beta'}$ are comparable for all $\beta,\beta'>0$.
We hence get the stated result for all $\beta>0$. 
\end{proof}
If the covariance of a Gaussian process has finite $\rho$-variation
for some $\rho<2$, it can be lifted in the sense of Friz--Victoir,
cf. \cite{friz-victoir-2007-gauss}. Finite $\rho$-variation of the
covariance also implies the embedding (\ref{eqn:CM_embedding}) with
$q=\rho$, cf. \cite[Proposition 17]{friz-victoir-2007-gauss}, which
means that \ref{eqn:CM_embedding} is fulfilled whenever $\rho<3/2$.
A slightly stronger condition, so called mixed $\left(1,\rho\right)$-variation,
was seen to imply an even sharper embedding with $q=\frac{2}{\rho^{-1}+1}$,
cf.~\cite{2013arXiv1307.3460F}, thus condition \ref{eqn:CM_embedding}
holds for all $\rho<2$. Choosing $q$ according to one of these embeddings,
we obtain 
\begin{cor}[Integrability of Gaussian rough paths]
Let $\left(\Omega,\mathcal{H},\gamma\right)$ be a centered Gaussian
space with $\Omega=C_{0}\left(\left[0,T\right],\mathbb{R}^{d}\right)$
and and let $X\colon\Omega\to C_{0}\left(\left[0,T\right],\mathbb{R}^{d}\right)$
denote the coordinate process. Assume that all components of $X$
are independent and that the 2--dimensional $\rho$-variation of the
covariance function $R$ of $X$ is finite for some $\rho<2$. Let
$\mathbf{X}$ denote the lift of $X$ in the sense of Friz--Victoir.
Then $N_{\beta}\left(\mathbf{X};\left[0,T\right]\right){}^{1/q}$
has Gaussian tails for every $\beta>0$. \end{cor}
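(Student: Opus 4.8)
The plan is to obtain the corollary as an immediate specialisation of Theorem \ref{thm:integrabilityOfRVs}: the work reduces to checking, for the coordinate process $X$ on $\Omega=C_{0}([0,T],\mathbb{R}^{d})$ and its Friz--Victoir enhancement $\mathbf{X}$, the three structural hypotheses of that theorem --- measurability of the lift map $\omega\mapsto\mathbf{X}(\omega)$ into a space $\mathcal{C}^{p}$ of geometric $p$-rough paths, the Cameron--Martin embedding (\ref{eqn:CM_embedding}) with a Young-complementary exponent $q$, and the translation invariance (\ref{eq:shift}) on a full-measure set --- and then quoting its conclusion together with the explicit moment bound.

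First I would fix the exponents. Since the components of $X$ are independent and the $2$-dimensional $\rho$-variation of $R$ is finite with $\rho<2$, the Friz--Victoir lifting theorem \cite{friz-victoir-2007-gauss} yields, for a suitable $p>2\rho$, a measurable map $F\colon\Omega\to\mathcal{C}^{p}(\mathbb{R}^{d})$ with $F=\mathbf{X}$ almost surely; this is the lift appearing in the statement. By the embeddings recalled before the corollary, finiteness of the $\rho$-variation of $R$ also furnishes a $q$ with $1\le q\le p$ and $1/p+1/q>1$ for which (\ref{eqn:CM_embedding}) holds: one may take $q=\rho$ (with $p\in(2\rho,3)$) in the range $\rho<3/2$ by \cite[Proposition 17]{friz-victoir-2007-gauss}, and the sharper $q=2/(\rho^{-1}+1)$ (with $p\in(2\rho,2\rho/(\rho-1))$) for $\rho\in[3/2,2)$ via \cite{2013arXiv1307.3460F}; in both cases a short computation confirms $1\le q\le p$ and $1/p+1/q>1$. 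Let $M$ be a bound for $\|\iota\|_{\mathcal{H}\hookrightarrow C^{q-\text{var}}}$.

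It remains to check the invariance set (\ref{eq:shift}), i.e.\ that $T_{h}(F(\omega))=F(\omega+h)$ simultaneously for all $h\in\mathcal{H}$ off one null set. This is the standard compatibility of the Gaussian rough path lift with Cameron--Martin translations: since $h$, viewed via $\iota$, has finite $q$-variation with $1/p+1/q>1$, the extra iterated integrals involving $h$ reduce to classical Young integrals which are stable under the approximations used to build $\mathbf{X}$, so the limit defining $\mathbf{X}$ exists for $\omega$ and $\omega+h$ off the same null set (see \cite{friz-victoir-book}, or the corresponding step in \cite{bibCassLyonsLitterer}). Since $\mathbf{X}$ is a.s.\ a genuine $p$-rough path, $\|F\|_{p-\text{var}}<\infty$ a.s., so for any $a\in(0,1)$ there is $K$ with $\mathbb{P}[\|F\|_{p-\text{var}}\le K]\ge a>0$. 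All hypotheses of Theorem \ref{thm:integrabilityOfRVs} now hold, and it supplies $\delta=\delta(p,q,K,a,M,\beta)>0$ with $\mathbb{E}[\exp(\delta N_{\beta}(\mathbf{X};[0,T])^{2/q})]<1/\delta$ for every $\beta>0$, whence $N_{\beta}(\mathbf{X};[0,T])^{1/q}$ has Gaussian tails. The only step needing real care is the exponent bookkeeping for $\rho$ near $2$, where the plain $\rho$-variation embedding is too weak and one must invoke the finer mixed $(1,\rho)$-variation estimate to keep $1/p+1/q>1$; the measurability of the lift and the translation property are by now classical, so there is no deeper obstacle.
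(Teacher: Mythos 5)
Your argument is correct and follows essentially the same route as the paper: both verify the hypotheses of Theorem \ref{thm:integrabilityOfRVs} for the Friz--Victoir lift (measurability into $\mathcal{C}^{0,p}$, the Cameron--Martin embedding with $q=\rho$ or $q=2/(\rho^{-1}+1)$ as recalled just before the corollary, and the translation property, which the paper obtains directly from \cite[Proposition 15.58]{friz-victoir-book}) and then invoke that theorem. Your additional bookkeeping of the admissible range of $p$ against the Young-complementarity condition $1/p+1/q>1$ is accurate and merely makes explicit what the paper leaves implicit.
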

\begin{proof}
 By construction of the lift, $\mathbf{X}$ takes values in $\mathcal{C}^{0,p}$
almost surely, and (\ref{eq:shift}) holds by \cite[Proposition 15.58]{friz-victoir-book}.
We thus conclude with Theorem \ref{thm:integrabilityOfRVs}. 
\end{proof}

\subsection{Application: Integrability of RSDE solutions}

We now apply these general results to Rough and Stochastic differential
equations (RSDEs) as introduced in Section \ref{sec:differential-equations-with}.
First we need a Lemma.
\begin{lem}
\label{lem:translation},\label{lemma:suff_crit_gaussian_int_joint_lift}~

i) For the joint lift $\boldsymbol{\Lambda}^{\boldsymbol{\eta}}$
from Theorem \ref{thm:joint lift} we have 
\[
\mathbb{P}\left[\boldsymbol{\Lambda}^{\boldsymbol{\eta}}\left(\omega+h\right)=T_{h}\boldsymbol{\Lambda}^{\boldsymbol{\eta}}\left(\omega\right)\ \forall h\in\mathcal{H}\left(\mathbb{R}^{e}\right)\right]=1
\]

ii) For all $r>0$ and $p>\frac{1}{\alpha}$ there is a $k$ such
that 
\[
\inf_{\|\mathbf{\boldsymbol{\eta}}\|_{\alpha-H\ddot{o}l}<r}\mathbb{P}\left[\|\boldsymbol{\Lambda}^{\boldsymbol{\eta}}\|_{p-var}\leq k\right]\ge\frac{1}{2}.
\]
\end{lem}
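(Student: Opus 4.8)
\textbf{Proof proposal for Lemma \ref{lem:translation}.}

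The plan is to verify the two statements separately, using the explicit formula \eqref{eq:lambda2} for $\boldsymbol{\Lambda}^{\boldsymbol{\eta}}$. For part (i), the point is that the Cameron--Martin shift $\omega\mapsto\omega+h$ acts on the Brownian motion $B$ by adding the deterministic (finite $1$-variation) path $h$, and one has to check that each of the four blocks of $\boldsymbol{\Lambda}^{2}$ transforms according to the Young-translation operator $T_{h}$ on $G^{2}(\mathbb{R}^{d+e})$. First I would recall that for a genuine Brownian rough path the identity $\boldsymbol{B}(\omega+h)=T_{h}\boldsymbol{B}(\omega)$ holds almost surely for all $h\in\mathcal{H}(\mathbb{R}^{e})$ simultaneously; this is \cite[Proposition 15.58]{friz-victoir-book} and takes care of the $\{d+1,\dots,d+e\}\times\{d+1,\dots,d+e\}$ block. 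The $\{1,\dots,d\}\times\{1,\dots,d\}$ block is literally $\boldsymbol{\eta}^{2}$, which does not involve $\omega$ at all, so it is trivially invariant, consistent with $T_{h}$ acting as the identity there. The two cross blocks, $\int_{0}^{\cdot}\eta^{i}_{u}\,dB^{j-d}_{u}$ and $\eta^{j}_{t}B^{i-d}_{t}-\int_{0}^{t}\eta^{j}_{u}\,dB^{i-d}_{u}$, are the ones that genuinely have to be checked: under $B\rightsquigarrow B+h$ a Young integral $\int\eta\,d(B+h)=\int\eta\,dB+\int\eta\,dh$ picks up exactly the correction term that $T_{h}$ prescribes on the cross-diagonal. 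The only subtlety is to arrange the exceptional null-set so that it does not depend on $h$; this is handled in the standard way (as in the Brownian case) by first establishing the identity for a countable dense set of $h$'s in $\mathcal{H}$ and then using continuity in $h$ of both sides in the relevant rough-path topology, the left-hand side being continuous by the Cameron--Martin theorem and the quasi-invariance of $\gamma$, the right-hand side by continuity of the translation operator. I expect the bookkeeping with the cross-terms, rather than any deep point, to be the main nuisance here.

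For part (ii), I would deduce the uniform-in-$\boldsymbol{\eta}$ lower bound on $\mathbb{P}[\|\boldsymbol{\Lambda}^{\boldsymbol{\eta}}\|_{p\text{-var}}\le k]$ directly from the Gaussian-tail estimate \eqref{eq:Gauss tail lambda} of Theorem \ref{thm:joint lift}. Indeed, fix $\alpha'\in(1/p,\alpha)$; since $p>1/\alpha$ such an $\alpha'$ exists, and $\alpha'$-H\"older regularity dominates $p$-variation, so $\|\boldsymbol{\Lambda}^{\boldsymbol{\eta}}\|_{p\text{-var}}\le C_{T}\|\boldsymbol{\Lambda}^{\boldsymbol{\eta}}\|_{\alpha'\text{-H\"ol}}$ for a constant depending only on $T$. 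By \eqref{eq:Gauss tail lambda} there is $\delta>0$ with $\sup_{\|\boldsymbol{\eta}\|_{\alpha\text{-H\"ol}}\le r}\mathbb{E}\exp(\delta\|\boldsymbol{\Lambda}^{\boldsymbol{\eta}}\|_{\alpha'\text{-H\"ol}}^{2})=:M<\infty$. A Chebyshev/Markov argument then gives, uniformly over $\|\boldsymbol{\eta}\|_{\alpha\text{-H\"ol}}\le r$,
\[
\mathbb{P}\left[\|\boldsymbol{\Lambda}^{\boldsymbol{\eta}}\|_{p\text{-var}}>k\right]\le\mathbb{P}\left[\|\boldsymbol{\Lambda}^{\boldsymbol{\eta}}\|_{\alpha'\text{-H\"ol}}>k/C_{T}\right]\le M\exp\left(-\delta k^{2}/C_{T}^{2}\right),
\]
and choosing $k=k(r,p,\alpha,\alpha',T)$ large enough that the right-hand side is $\le 1/2$ yields $\inf_{\|\boldsymbol{\eta}\|_{\alpha\text{-H\"ol}}<r}\mathbb{P}[\|\boldsymbol{\Lambda}^{\boldsymbol{\eta}}\|_{p\text{-var}}\le k]\ge 1/2$, which is the claim. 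No real obstacle is anticipated in part (ii); it is essentially a restatement of the already-proven uniform Fernique estimate together with the elementary comparison of $p$-variation and H\"older norms.

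\bigskip

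\begin{proof}[Proof of Lemma \ref{lem:translation}]
\textit{(i)} Write $\boldsymbol{\Lambda}^{\boldsymbol{\eta}}$ via the explicit formulas \eqref{eq:lambda2}. The blocks with indices in $\{1,\dots,d\}$ only involve the deterministic path $\boldsymbol{\eta}$ and are therefore unaffected by the shift $\omega\mapsto\omega+h$; since $T_{h}$ acts as the identity on the corresponding coordinates (recall $h\in\mathcal{H}(\mathbb{R}^{e})$ only shifts the $B$-directions), the identity $\boldsymbol{\Lambda}^{\boldsymbol{\eta}}(\omega+h)=T_{h}\boldsymbol{\Lambda}^{\boldsymbol{\eta}}(\omega)$ holds trivially on these coordinates. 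For the pure $B$-block with indices in $\{d+1,\dots,d+e\}$, the identity is \cite[Proposition 15.58]{friz-victoir-book}. For the mixed blocks, with $i\in\{1,\dots,d\}$, $j\in\{d+1,\dots,d+e\}$, the shift replaces $B^{j-d}$ by $B^{j-d}+h^{j-d}$ and, by additivity and continuity of the Young integral against the finite-variation path $h$,
\[
\int_{0}^{t}\eta_{u}^{i}\,d\left(B^{j-d}+h^{j-d}\right)_{u}=\int_{0}^{t}\eta_{u}^{i}\,dB_{u}^{j-d}+\int_{0}^{t}\eta_{u}^{i}\,dh_{u}^{j-d},
\]
which is precisely the prescription of $T_{h}$ on that cross-diagonal entry; the symmetric block is handled identically using the integration-by-parts form in \eqref{eq:lambda2}. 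To make the exceptional set independent of $h$, fix a countable set $\mathcal{D}\subset\mathcal{H}(\mathbb{R}^{e})$ dense in $q$-variation norm; the identity holds a.s.\ for each $h\in\mathcal{D}$, hence a.s.\ for all $h\in\mathcal{D}$ simultaneously. On this full-measure set both $h\mapsto\boldsymbol{\Lambda}^{\boldsymbol{\eta}}(\omega+h)$ and $h\mapsto T_{h}\boldsymbol{\Lambda}^{\boldsymbol{\eta}}(\omega)$ are continuous from $(\mathcal{H},\,|\cdot|_{q\text{-var}})$ into $\mathcal{C}^{0,\alpha'}(\mathbb{R}^{d+e})$ — the former by the quasi-invariance of $\gamma$ and the Cameron--Martin theorem applied to $B$, the latter by continuity of the translation operator \cite[Chapter 9]{friz-victoir-book} — so the identity extends to all $h\in\mathcal{H}(\mathbb{R}^{e})$. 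This proves (i).

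\textit{(ii)} Fix $r>0$ and $p>1/\alpha$, and pick $\alpha'\in(1/p,\alpha)$. Since $\alpha'$-H\"older regularity on $[0,T]$ dominates $p$-variation, there is a constant $C_{T}$ with $\|\boldsymbol{\Lambda}^{\boldsymbol{\eta}}\|_{p\text{-var};[0,T]}\le C_{T}\|\boldsymbol{\Lambda}^{\boldsymbol{\eta}}\|_{\alpha'\text{-H\"ol}}$. By \eqref{eq:Gauss tail lambda} of Theorem \ref{thm:joint lift} there is $\delta>0$ such that $M:=\sup_{\|\boldsymbol{\eta}\|_{\alpha\text{-H\"ol}}\le r}\mathbb{E}\exp\big(\delta\|\boldsymbol{\Lambda}^{\boldsymbol{\eta}}\|_{\alpha'\text{-H\"ol}}^{2}\big)<\infty$. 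By Markov's inequality, for every $k>0$ and every $\boldsymbol{\eta}$ with $\|\boldsymbol{\eta}\|_{\alpha\text{-H\"ol}}\le r$,
\[
\mathbb{P}\left[\|\boldsymbol{\Lambda}^{\boldsymbol{\eta}}\|_{p\text{-var}}>k\right]\le\mathbb{P}\left[\exp\left(\delta\|\boldsymbol{\Lambda}^{\boldsymbol{\eta}}\|_{\alpha'\text{-H\"ol}}^{2}\right)>\exp\left(\delta k^{2}/C_{T}^{2}\right)\right]\le M\exp\left(-\delta k^{2}/C_{T}^{2}\right).
\]
Choosing $k=k(r,p,\alpha,\alpha',T)$ large enough that $M\exp(-\delta k^{2}/C_{T}^{2})\le\tfrac{1}{2}$ gives $\mathbb{P}[\|\boldsymbol{\Lambda}^{\boldsymbol{\eta}}\|_{p\text{-var}}\le k]\ge\tfrac{1}{2}$ for all such $\boldsymbol{\eta}$, hence the stated uniform lower bound.
\end{proof}
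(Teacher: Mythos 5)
In part (ii) your argument is correct and essentially the same as the paper's; the paper uses a first-moment Markov inequality while you use the exponential version, but both follow directly from the locally uniform Fernique bound \eqref{eq:Gauss tail lambda} together with $\|\cdot\|_{p\text{-var}}\lesssim\|\cdot\|_{\alpha'\text{-H\"ol}}$, so this is a matter of taste.

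Part (i) has a genuine gap, precisely at the step you flagged as ``the only subtlety''. The block decomposition and the additivity $\int\eta\,d(B+h)=\int\eta\,dB+\int\eta\,dh$ for each fixed $h$ are fine. The problem is the passage from a countable dense $\mathcal{D}\subset\mathcal{H}$ to all of $\mathcal{H}$: you assert that for a.e.\ $\omega$ the map $h\mapsto\boldsymbol{\Lambda}^{\boldsymbol{\eta}}(\omega+h)$ is continuous from $(\mathcal{H},|\cdot|_{q\text{-var}})$ into $\mathcal{C}^{0,\alpha'}$, ``by the quasi-invariance of $\gamma$ and the Cameron--Martin theorem applied to $B$.'' Quasi-invariance is a statement about null sets and says nothing about continuity of $\omega\mapsto F(\omega)$ along Cameron--Martin directions for a merely measurable $F$. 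In fact the cross entries $\int_0^{\cdot}\eta^i_u\,dB^{j-d}_u$ are genuine It\={o} integrals (with $\alpha<1/2$ the pair $(\eta,B)$ is out of Young range), and such stochastic integrals are famously \emph{not} path-continuous functionals of $\omega$; there is no a priori reason for $h\mapsto\int\eta\,dB(\omega+h)$ to be continuous. Knowing the identity on $\mathcal{D}$ only tells you that the restriction of the left-hand side to $\mathcal{D}$ agrees with a continuous function; it does not control the left-hand side off $\mathcal{D}$. So the claimed continuity is essentially equivalent to what you are trying to prove, and the density argument does not close the loop.

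The paper avoids this by a different mechanism: it shows $\int\eta_{0,r}\,dB_r$ is an $L^2$ (hence a.s.\ along a subsequence) limit of the Riemann--Stieltjes integrals $\int\eta_{0,r}\,dB^D_r$ against piecewise-linear approximations $B^D$, writing $\int\eta_{0,r}\,dB^D_r=\sum_i\eta_{0,\bar t_i}B_{t_i,t_{i+1}}$ and controlling the error. For $B^D$ the translation identity $\int\eta_{0,r}\,d(B^D+h^D)_r=\int\eta_{0,r}\,dB^D_r+\int\eta_{0,r}\,dh^D_r$ holds \emph{pointwise in $\omega$ and $h$}, with no exceptional set. One then follows the argument of \cite[Proposition 15.58]{friz-victoir-book}: the exceptional set is the $h$-independent set where the $\omega$-wise approximation fails to converge, and on its complement the identity passes to the limit simultaneously for all $h$. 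To fix your proof you would need to replace the density-plus-continuity step with such an approximation argument (or supply an actual proof of $\mathcal{H}$-continuity of the It\={o} cross-integral, which is itself nontrivial).
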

\begin{proof}
i) Let $D=\left\{ 0=t_{0}<\ldots<t_{m}=T\right\} $ be any partition
of $\left[0,T\right]$, $\left|D\right|$ denotes its mesh size. Let
$B^{D}$ be the piecewise linear approximation of $B$ on the partition
$D$. An easy calculation shows that 
\[
\int\eta_{0,r}dB_{r}^{D}=\sum_{i}\eta_{0,\bar{t}_{i}}B_{t_{i},t_{i+1}}
\]
for some deterministic $\bar{t}_{i}\in[t_{i},t_{i+1}]$ and where
the integral on the left hand side is defined as Riemann--Stieltjes
integral. As a consequence, 
\begin{align*}
\left|\int\eta_{0,r}dB_{r}-\int\eta_{0,r}dB_{r}^{D}\right|_{L^{2}} & \le\left|\int\eta_{0,r}dB_{r}-\sum_{i}\eta_{0,t_{i}}B_{t_{i},t_{i+1}}\right|_{L^{2}}\\
 & \qquad+\left|\sum_{i}\eta_{0,t_{i}}B_{t_{i},t_{i+1}}-\sum_{i}\eta_{0,\bar{t}_{i}}B_{t_{i},t_{i+1}}\right|_{L^{2}}.
\end{align*}
Now the first term converges to zero, as $|D|\to0$, by definition
of the It\={o} integral as limit of left-point Riemann sums. Using
the fact that $\eta$ is deterministic, we dominated the second term
by 
\[
\left(\sum_{i}\left|\eta_{t_{i}}-\eta_{\bar{t}_{i}}\right|\left|t_{i+1}-t_{i}\right|\right)^{1/2},
\]
which converges to $0$ as $\left|D\right|\to0$, by continuity of
$\eta$.

Using this characterization of the It\={o} integral as the limit of
smooth integrals, we can now finish the proof using exactly the same
argument as in \cite[Proposition 15.58]{friz-victoir-book}.

ii) If $\|\mathbf{\boldsymbol{\eta}}\|_{\alpha-H\ddot{o}l}<r$, by
Markov's inequality and Theorem \ref{thm:joint lift}, 
\[
\mathbb{P}\left[\|\boldsymbol{\Lambda}^{\boldsymbol{\eta}}\|_{p-var}\leq k\right]\geq1-\frac{C}{k}
\]
where $C$ is a constant depending on $r$. Choosing $k$ large enough
gives the result. \end{proof}
\begin{cor}[Integrability of joint lift]
 \label{cor:integrabilityOfJointLift} Let $\boldsymbol{\Lambda}^{\boldsymbol{\eta}}$
be the joint lift from Theorem \ref{thm:joint lift} with sample paths
in a $p$-rough paths space with $p>\frac{1}{\alpha}$. Then $N_{\beta}\left(\boldsymbol{\Lambda}^{\boldsymbol{\eta}};\left[0,T\right]\right)$
has Gaussian tails for every $\beta>0$. More specific, for every
$r>0$ there is a $\delta=\delta\left(p,\alpha,\beta,r\right)>0$
such that 
\[
\sup_{\eta\,:\,\|\boldsymbol{\eta}\|_{\alpha-H\ddot{o}l}\leq r}\mathbb{E}\left[\exp\left(\delta N_{\beta}\left(\boldsymbol{\Lambda}^{\boldsymbol{\eta}};\left[0,T\right]\right)^{2}\right)\right]\leq\frac{1}{\delta}.
\]
\end{cor}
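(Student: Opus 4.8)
The corollary follows by specialising Theorem~\ref{thm:integrabilityOfRVs} to the Wiener space carrying the Brownian motion $B$, with $F$ the joint lift $\boldsymbol{\Lambda}^{\boldsymbol{\eta}}$ and the two hypotheses of that theorem supplied by the two parts of Lemma~\ref{lem:translation}. The plan is to check that every ingredient entering Theorem~\ref{thm:integrabilityOfRVs} can be chosen uniformly over $\left\Vert \boldsymbol{\eta}\right\Vert {}_{\alpha-H\ddot{o}l}\le r$, so that the constant $\delta$ it produces inherits this uniformity.

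First I would fix the Gaussian space: take $\Omega=C_{0}\left(\left[0,T\right],\mathbb{R}^{e}\right)$ with Wiener measure and Cameron--Martin space $\mathcal{H}=\mathcal{H}\left(\mathbb{R}^{e}\right)$, and set $F:=\boldsymbol{\Lambda}^{\boldsymbol{\eta}}$. Fixing $p>\frac{1}{\alpha}$ and choosing $\alpha'\in\left(\frac{1}{p},\alpha\right)$, Theorem~\ref{thm:joint lift} gives $F\left(\omega\right)\in\mathcal{C}^{0,\alpha'}\left(\mathbb{R}^{d+e}\right)\subset\mathcal{C}^{p}\left(\mathbb{R}^{d+e}\right)$ almost surely, so $F$ is an admissible measurable map into $\mathcal{C}^{p}$. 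As embedding I would use $\iota:\mathcal{H}\hookrightarrow C^{1-\text{var}}\left(\left[0,T\right],\mathbb{R}^{d+e}\right)$, $h\mapsto\left(0,h\right)$ (zero in the first $d$ coordinates), which is exactly the translation direction appearing in Lemma~\ref{lem:translation}~i); since $\left|h\right|_{1-\text{var}}\le\sqrt{T}\,\left|h\right|_{\mathcal{H}}$ its operator norm is bounded by $M:=\sqrt{T}$, and with $q=1$ the constraint $\frac{1}{p}+\frac{1}{q}>1$ holds automatically. Note that $p$, $q$ and $M$ do not depend on $\boldsymbol{\eta}$, and that $q=1$ turns $N_{\beta}^{2/q}$ into $N_{\beta}^{2}$, matching the desired statement.

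Next I would verify the two hypotheses of Theorem~\ref{thm:integrabilityOfRVs}. The translation invariance (\ref{eq:shift}), i.e.~that $\left\{ \omega:\boldsymbol{\Lambda}^{\boldsymbol{\eta}}\left(\omega+h\right)=T_{h}\boldsymbol{\Lambda}^{\boldsymbol{\eta}}\left(\omega\right)\ \forall h\in\mathcal{H}\right\}$ has full measure, is precisely Lemma~\ref{lem:translation}~i). For the positive-probability ball condition, Lemma~\ref{lem:translation}~ii) supplies, for the given $r>0$, a radius $k=k\left(p,\alpha,r\right)$ with $\inf_{\left\Vert \boldsymbol{\eta}\right\Vert {}_{\alpha-H\ddot{o}l}<r}\mathbb{P}\left[\left\Vert \boldsymbol{\Lambda}^{\boldsymbol{\eta}}\right\Vert _{p-\text{var}}\le k\right]\ge\frac{1}{2}$, so one may take $K:=k$ and $a:=\frac{1}{2}$ simultaneously for all such $\boldsymbol{\eta}$. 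Theorem~\ref{thm:integrabilityOfRVs} then yields $\delta=\delta\left(p,q,K,a,M,\beta\right)>0$ with $\mathbb{E}\left[\exp\left(\delta N_{\beta}\left(\boldsymbol{\Lambda}^{\boldsymbol{\eta}};\left[0,T\right]\right)^{2}\right)\right]<\frac{1}{\delta}$ for each fixed $\boldsymbol{\eta}$; since all arguments of $\delta$ were chosen independently of $\boldsymbol{\eta}$ inside $\left\{ \left\Vert \boldsymbol{\eta}\right\Vert {}_{\alpha-H\ddot{o}l}\le r\right\}$, the same $\delta$ works for every such $\boldsymbol{\eta}$, and taking the supremum gives the claim.

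The only genuinely delicate point is this uniformity bookkeeping: one must ensure that the constant produced by Theorem~\ref{thm:integrabilityOfRVs} depends on $\boldsymbol{\eta}$ solely through the radius $k$ of Lemma~\ref{lem:translation}~ii) (uniform on bounded sets) and through $M$ (which is $\boldsymbol{\eta}$-free). The measure-theoretic care needed for the translation identity (null sets depending on $\boldsymbol{\eta}$) is already absorbed into Lemma~\ref{lem:translation}~i), so the rest is routine.
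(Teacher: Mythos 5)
Your proposal is correct and follows essentially the same route as the paper's own proof: specialise Theorem~\ref{thm:integrabilityOfRVs} to the Wiener space of $B$ with $q=1$ and $\|\iota\|_{\mathcal{H}\hookrightarrow C^{1-\text{var}}}\le\sqrt{T}$, and supply the two hypotheses (translation invariance and positive-probability $p$-variation ball) via Lemma~\ref{lem:translation}. The paper states this in two lines; your version makes explicit the $\boldsymbol{\eta}$-uniformity bookkeeping and the identification of the shift direction as $h\mapsto(0,h)\in\mathbb{R}^{d+e}$, which is accurate but left implicit in the original.
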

\begin{proof}
For the Brownian motion, (\ref{eqn:CM_embedding}) holds with $q=1$
and $\|\iota\|_{\mathcal{H}\hookrightarrow C^{1-\text{var}}}\leq\sqrt{T}$,
cf. \cite[Proposition 17]{friz-victoir-2007-gauss}. The assertion
follows from Theorem \ref{thm:integrabilityOfRVs} and Lemma \ref{lemma:suff_crit_gaussian_int_joint_lift}.\bibliographystyle{plain}
\bibliography{/home/hd/Dropbox/projects/BibteX/roughpaths}
\end{proof}

\end{document}